\documentclass[11pt]{amsart}

\usepackage[colorlinks=false]{hyperref}

\usepackage[english]{babel}
\usepackage{packages}
\usepackage{commands}
\usepackage{cd-short}
\usepackage[backend=biber,style=alphabetic]{biblatex}
\usepackage{xcolor}
\usepackage{booktabs}

\newcommand{\fglaw}{\mathop{\!+\!}\limits}
\newcommand{\Fplus}[1]{\mathbin{\mkern-4mu\mathop{+}\limits_{#1}\mkern-4mu}}
\usepackage[margin=1in]{geometry}
\usepackage[nameinlink]{cleveref}
\begin{document}


\title{Level structures and cyclic power operations on the homology of $\EE_\infty$ spaces}
\author{Catherine Li}
\date{July 17, 2026}
\begin{abstract}
	In this document, we discuss how cyclic power operations for periodic complex bordism can be understood through algebraic geometry, following from the theory of power operations for Morava $E$-theory developed by Ando, Hopkins, and Strickland. Using this perspective, we describe how one computes power operations on the $E$-homology of $BU$ and $BU\times \ZZ$, where $E$ is a 2-periodic even $\EE_\infty$ ring. Then we provide some explicit example computations at $p=2$; in particular, we compute the action of the additive operations $\Delta$ on the indecomposables $\widehat{Q}(E_0(BU))$ for $E=KU,E_2$.
\end{abstract}
\maketitle
\tableofcontents


\section{Introduction} 

Let $R$ be an $\EE_\infty$ ring spectrum. Then there is a theory of power operations acting on $\pi_*R$ arising from a refinement of the $m$-th power map on $R$-cohomology. In the case that $R$ is an algebra over a Morava $E$-theory $E$, work of Ando, Hopkins, and Strickland \cite{ando2004sigmaorientationhinfinitymap} developed an algebro-geometric framework for this theory. In particular, Strickland \cite{strickland1998moravaetheorysymmetricgroups} showed that the target $E^0(B\Sigma_{p^r})/I_{\tr}$ of the total additive power operation map $P_E$ corepresents the scheme $\mathrm{Sub}_r(\GG_E)$ of rank $p^r$ subgroups of $\GG_E$. Hence, one can interpret $P_E$ in terms of isogenies of deformations of formal groups. This perspective allows one to explicitly compute power operations for Morava $E$-theory itself, most notably at height 2, where supersingular elliptic curves give a good source of accessible, explicit models of $\GG_E$. For instance, \cite{rezk2008poweroperationsmoravaetheory} and \cite{Zhu} provide detailed calculations of power operations on height 2 $E$-theory at primes 2 and 3, respectively. Aside from this work, there appear to be few explicit computations of power operations via this algebro-geometric framework. 

The goal of this paper is to make this methodology more explicit and to apply its philosophy to other $\EE_\infty$ rings with interesting algebro-geometric descriptions (e.g. the homologies of certain familiar $\EE_\infty$ spaces). Specifically, we give a description of cyclic power operations on the periodic complex bordism spectrum $MP$ via level structures, and we then apply this framework to perform computations in two settings of interest, described in more detail in \ref{sec:intro-MP} and \ref{sec:intro-EBU} below.

\subsection{Power operations on 2-periodic even $\EE_\infty$ rings}
\label{sec:intro-MP}

Consider the total cyclic power operation on $MP_0\cong MU_*$
\[P:MP_0\to MP^0(BC_p).\]
Quillen \cite{quillen1971elementary} gave a formula for $P(x)$ on classes $x\in MP_0$ in terms of Landweber--Novikov operations, following from the fact that the power operation on the coordinate $x_{MP}$ can be interpreted in terms of Chern classes of particular vector bundles. In the case of the bordism classes $c_n=[\CC P^n]$, Johnson and Noel \cite{Johnson_2010} give explicit expressions for $P(c_n)$ in terms of the other $c_i$'s. Most recently, Gu \cite{gu2022power} carried out explicit computations of power operations on classes in $MP_0$ using Hazewinkel generators and the identities of \cite{Johnson_2010}.

In \Cref{sec:theory}, we explain how to calculate cyclic power operations on classes in $MP_0$ from an algebro-geometric point of view, which originates from the fact that $MP_0$ classifies formal groups equipped with a coordinate, and that the cyclic power operation can be described in terms of level structures on these formal groups. We then see that this approach can be used immediately to see how to compute power operations on the homology of certain $\EE_\infty$ spaces, e.g. on $E[BU\times \ZZ]$ and $E[BU]$, where $E$ is a $2$-periodic even $\EE_\infty$ ring. 

While the perspective of \Cref{sec:theory} is likely known to experts, we are not aware of its appearance in the literature for these rings specifically. In the case of $E[B\mathcal{V}]$, where $E$ is Morava $E$-theory, and $B\mathcal{V}=\coprod BU(n)$, Rezk \cite[Theorem 4.9]{Rez15} gives a description of power operations on elements which arise as characteristic classes of vector bundles, such as the universal function $\gamma_{univ}\in (E[B\mathcal{V}])^0(\CP)$; in spirit, this is a variant of Quillen's computation of the power operation on the coordinate $x_{MP}$. Our approach in \Cref{sec:theory} differs in the sense that we deduce expressions for $MP\otimes MP$, $MP[BU\times \ZZ]$, and $MP[BU]$ by observing that the maps relating these spectra commute with power operations, thus avoiding a rederivation of Quillen's calculation.

Then in \Cref{sec:example}, as a simple example, we compute the $C_2$ power operation on $x_1\in MP_0$ and $b_1\in MP_0(MP)$ in terms of their respective formal group laws, and in the former case, we check our expression against a formula of \cite{Johnson_2010}. 

\subsection{Power operations on $KU[BU]$, $E_2[BU]$, and $E_2[BSU]$}
\label{sec:intro-EBU}

This project began as an attempt to understand how to compute power operations on homology, or specifically, how to compute operations on $KU[BU]$, $E_2[BU]$, and $E_2[BSU]$; these computations comprise \Cref{sec:calc} of this paper. Since the homology of $BU$ has an algebro-geometric meaning (as it classifies pointed maps to $\GG_m$), one might naturally ask for the meaning of the total power operation through this lens. 

Simultaneously, these rings are of general interest in chromatic homotopy theory. For instance, $\EE_\infty$ maps $MU\to E$ correspond to $\EE_\infty$ $E$-algebra maps $E[BU]\to E$, and thus the structure of $E[BU]$ can inform us about $\EE_\infty$ complex orientations of $E$; explicitly,  \cite[Theorem 6.5.1]{balderrama2023algebraictheoriespoweroperations} (see also \cite[Proposition 2.8]{rezk2013power}) gives a spectral sequence of signature
\[E_1^{p,q}=H^{p-q}_{E_*\otimes\TT/E_*}(E_*(BU);E_{*+p})\Rightarrow \pi_q(\CAlg_E^{\textrm{aug}}(E[BU],E)),\]
where $E$ is Morava $E$-theory, and $\TT$ is Rezk's monad $\TT$ (for details on $\TT$, see \cite{rezk2009congruencecriterionpoweroperations}, \cite{rezk2017ringspoweroperationsmorava}). At height 1, the module of indecomposables $\widehat{Q}(E_0(BU))$ is projective over $\Delta$ (the operations which act naturally on indecomposables), but it fails to be projective at height 2 \cite[Remark 6.5.5]{balderrama2023algebraictheoriespoweroperations}. Computing power operations may offer a view into how this structural difference manifests. 

Lastly, in a broader sense: much is already understood about the homology of Eilenberg--MacLane spectra, whose infinite loop spaces can be thought of as being ``spaces at chromatic height 0''. The next step would be to examine the homology of spaces at height $1$: $BU$, $BU\times \ZZ$, $BSU$, etc., which are the simplest examples of infinite loop spaces of height 1 objects. 

In \Cref{sec:calc}, we compute the action of $\Delta$ on  $\widehat{Q}(KU_0(BU))$, $\widehat{Q}((E_2)_0(BU))$, and $\widehat{Q}((E_2)_0(BSU))$ for $p=2$. We display our results below, letting $P$ denote the additive $C_2$ power operation.

\begin{thm*}[\autoref{thm:kubu}]
	Let $KU$ denote complex $K$-theory, and let $(KU[BU])_0 \cong
	\ZZ[b_1, b_2, \ldots]$. Let $\theta$ be defined by $P(x) = x^2 + 2\theta(x)$. Then
	\[\theta(b_n)=\sum_{i\geq n}(-1)^n 2^{i-n-1}\left[{i\choose n}+{i-1\choose n-1}\right]b_{n+i} \mod (b_1, b_2, \ldots)^2.\]
\end{thm*}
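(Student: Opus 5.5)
The plan is to reduce the computation to the reduced $KU$-homology of $\CP$ and then dualize a known cohomology operation. The $\EE_\infty$ (H-space) map $\CP\to BU$ induces a surjection $\widetilde{KU}_0(\CP)\to \widehat{Q}(KU_0(BU))$, and in fact an isomorphism onto the indecomposables. It sends the basis $\beta_n$ dual to $x^n$ to $b_n$ modulo $(b_1,b_2,\ldots)^2$. Here $x=1-L\in KU^0(\CP)$ is the multiplicative coordinate. This is the algebro-geometric statement that $KU_0(BU)$ is free on $\Spf KU^0(\CP)=\widehat{\GG}_m$. By the results of \Cref{sec:theory}, the total power operation on $KU[BU]$ is natural in $\EE_\infty$ maps. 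The operation $\theta$ is additive modulo decomposables, because $P(a+b)-P(a)-P(b)$ is a transfer, i.e.\ divisible by $2$ times a product. So it suffices to compute the operation on $\widehat{Q}$ through $\widetilde{KU}_0(\CP)$.

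\textbf{Step 1.} Identify the additive power operation on $KU[X]$, modulo the transfer ideal, for a space $X$ and a class coming from $KU_0(X)$. I would use the description of \Cref{sec:theory}: $P$ on $E[X]$ is induced by the diagonal $BC_2\times X\to (X^{2})_{hC_2}$ together with $P_{KU}$. After reducing modulo $I_{\tr}$, $P_{KU}$ becomes the Adams operation $\psi^2$, since the ring $KU^0(BC_2)/I_{\tr}\cong \ZZ$ corepresents subgroups of order $2$ of $\widehat{\GG}_m$, and that subgroup is unique. The resulting operation on $KU_0(X)$ should be the map $\psi_2$ that is $KU$-linearly dual to $\psi^2$ on $KU^0(X)$. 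Indeed, on a line bundle it sends $[L]\mapsto[L^{2}]$, which is exactly the isogeny $\widehat{\GG}_m\to\widehat{\GG}_m/\mu_2$. So I expect $P(\beta)\equiv \psi_2(\beta)$, and hence
\[\theta(\beta)=\tfrac12\bigl(\psi_2(\beta)-\beta^2\bigr)\equiv \tfrac12\,\psi_2(\beta)\pmod{\text{decomposables}}.\]
Division by $2$ is legitimate because $KU_0(BU)$ is torsion free.

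\textbf{Step 2.} Compute $\psi_2$ on the basis $\beta_n$. Since $\psi^2(x)=1-L^2=2x-x^2$, we have
\[\langle \psi_2\beta_n, x^m\rangle=\langle\beta_n,(2x-x^2)^m\rangle=(-1)^{n-m}2^{2m-n}\binom{m}{n-m}.\]
This gives $\psi_2\beta_n=\sum_m(-1)^{n-m}2^{2m-n}\binom{m}{n-m}\beta_m$. Pushing forward to $BU$ and dividing by $2$ yields $\theta(b_n)$ as an explicit sum of $b_m$ modulo $(b_1,b_2,\ldots)^2$.

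\textbf{Step 3.} Reconcile this with the stated formula. This means matching the paper's normalization of $b_n$, which may be based on a different coordinate such as $L-1$ or a shifted index convention. It also means rewriting the binomial coefficients, for instance via Pascal's rule, to reach the form $\binom{i}{n}+\binom{i-1}{n-1}$.

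The main obstacle is Step 1: justifying rigorously that on the homology of a space the reduction of $P$ modulo transfers is the dual Adams operation, compatibly with the $\EE_\infty$ structure of $BU$. I would first try to use the level-structure description of \Cref{sec:theory} applied to $KU[\CP]$, checking the claim on the generating class $[L]$ and extending by naturality and multiplicativity. A secondary nuisance is the bookkeeping of sign and indexing conventions in Step 3.
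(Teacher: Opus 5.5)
There is a genuine gap in Step 1: the proposed identification is false, not merely hard to justify. The operation that reduces to $\psi^2$ modulo transfers is the \emph{cohomology} operation $KU^0(X)\to KU^0(X\times BC_2)/I_{\tr}$, which is built from the diagonal of $X$. The operation in the theorem lives on $\pi_0KU[X]=KU_0(X)$. It is built from the $\EE_\infty$ multiplication $(X^{2})_{hC_2}\to X$ of $X$ itself, so it depends on the $\EE_\infty$-space structure of $X$, and there is no reason for it to be the linear dual of $\psi^2$.

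Your own Step 2 shows that it is not. It gives $\psi_2\beta_1=2\beta_1$, which would force $P(b_1)=2b_1\not\equiv b_1^2 \pmod 2$. It also gives $\tfrac12\psi_2\beta_2=2\beta_2-\tfrac12\beta_1$, which is not integral. More decisively, $\psi_2\beta_n$ only involves $\beta_m$ with $n/2\le m\le n$. The statement instead has $\theta(b_n)$ supported on $b_{n+i}$ with $i\ge n$; for example $\theta(b_1)\equiv-(b_2+3b_3+8b_4+\cdots)$. So no change of coordinate or indexing in Step 3 can reconcile the two.

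The reduction to $\CP$ is also unavailable. The map $\CP\to BU$ is not an $H$-map from $(\CP,\otimes)$ to $(BU,\oplus)$. No $H$-structure on $\CP$ can make it one, because the image of $KU_0(\CP)$ is the span of $1,b_1,b_2,\ldots$, which is not a subring.

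The missing input is the actual description of the homology operation on $BU$, which is \autoref{thm:nu}(3). Take the $\EE_\infty$ coordinate $x=1-L$, so that $x'=P^{\CP}(x)=x(x+_\GG z)$. Then the pullback $h'=P^*h$ of the universal pointed map $h(x)=1+\sum_j b_jx^j$ is the normalized norm $h'(x')=h(x)\,h(x+_\GG z)/h(z)$ along the isogeny.

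Modulo transfers $z=2$, so $y:=x+_\GG z=2-x$ and $x'=xy$. Modulo decomposables the norm linearizes to a trace:
\[\textstyle\sum_n P(b_n)(x')^n=\sum_j b_j\,(x^j+y^j-2^j).\]
So on indecomposables, $P$ is dual to the trace $f\mapsto f(x)+f(2-x)-f(2)$, taking functions of $x$ to functions of $x'$. It is not dual to the pullback $x'\mapsto x(2-x)=2x-x^2$. Your $\psi^2(x)=2x-x^2$ is exactly that pullback, so you dualized the map in the wrong direction.

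To finish, expand $x^j+y^j$ by Waring's formula in terms of $x+y=2$ and $xy=x'$. This gives, modulo decomposables,
\[P(b_n)=\sum_{j\ge 2n}(-1)^n2^{j-2n}\bigl[\tbinom{j-n}{n}+\tbinom{j-n-1}{n-1}\bigr]b_j.\]
Dividing by $2$, using that $b_n^2$ is decomposable, yields the stated formula.
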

We obtain similar integral formulas for $E_2[BU]$, which we provide in \autoref{thm:E2bu}. For readability purposes, below we display the mod $(2,a)$ reduction of our results:
\begin{thm*}[\autoref{thm:E2bu}, mod $(2,a)$ version]
	Let $E_2$ be the Morava $E$-theory at height $2$, $p=2$ constructed by Rezk in \cite{rezk2008poweroperationsmoravaetheory}. Let $(E_2[BU])_0 \cong \ZZ_2[[a]][b_1, b_2, \ldots]$. Write $P(x)=Q_0(x)+Q_1(x)d+Q_2(x)d^2$ in Rezk's coordinate $d$ on $E_2^0(BC_2)/I_{\tr}$, with $Q_0(x)= x^2+2\theta(x)$. Then modulo $(2,a)$ and $(b_1, b_2,\ldots)^2$, we find that $\theta(b_i)$, $Q_1(b_i)$ and $Q_2(b_i)$ depend only on $i\bmod 4$:
	\begin{table}[h]
		\centering
		\renewcommand{\arraystretch}{0.9}
		\setlength{\tabcolsep}{10pt}
		\begin{tabular}{c|lll}
			\toprule
			$i \bmod 4$ & $\theta(b_i)$ & $Q_1(b_i)$ & $Q_2(b_i)$ \\
			\midrule
			$0$ & $b_{2i} + b_{2i+3}$ & $b_{2i+1}$ & $b_{2i-1} + b_{2i+2}$ \\
			$1$ & $b_{2i} + b_{2i+3}$ & $b_{2i+1}$ & $b_{2i-1}$ \\
			$2$ & $b_{2i}$            & $b_{2i+1}$ & $b_{2i-1} + b_{2i+2}$ \\
			$3$ & $b_{2i}$            & $b_{2i+1}$ & $b_{2i-1}$ \\
			\bottomrule
		\end{tabular}
		\captionsetup{position=below,aboveskip=5pt,belowskip=-5pt}
		\caption{Power operations on $E_2[BU]$ mod $(2,a)$, $(b_1,b_2,\ldots)^2$.}
	\end{table}
\end{thm*}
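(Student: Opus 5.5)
The plan is to use the description of power operations on $E[BU]$ from \Cref{sec:theory}. Recall that $(E[BU])_0$ corepresents series $f(x)=1+\sum_{i\ge1}b_ix^i$, i.e.\ pointed maps $\GG_E\to\GG_m$ expressed in the fixed coordinate $x$ of $\GG_E$. The total $C_2$ operation $P$ then has the following description over $E^0(BC_2)/I_{\tr}$, which corepresents subgroups $H$ of order $2$. Let $\psi:\GG_E\to\GG_E/H$ be the quotient isogeny. The operation sends $f$ to the series $P(f)$ determined by
\[P(f)(\psi(x))=\prod_{h\in H}f(x\Fplus{\GG}h),\]
where $\psi(x)$ is read in the coordinate on $\GG_E/H$ pulled back along the deformation identification supplied by Strickland's theorem. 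This identity follows because the maps $E[BU]\to MP[BU]$ and the comparison maps of \Cref{sec:theory} commute with power operations.

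Writing $f=1+g$ with $g=\sum b_ix^i$, the product is $1+\sum_{h}g(x\Fplus{\GG}h)$ modulo $(b_1,b_2,\ldots)^2$. So the linear part of $P(b_i)$ is the coefficient extraction
\[\sum_i b_i\bigl(x^i+(x\Fplus{\GG}e)^i\bigr)=\sum_n P(b_n)\,\psi(x)^n,\]
where $e$ is the generator of $H$. The computation therefore reduces to three ingredients, which I would compute in order:
\begin{enumerate}
\item Rezk's explicit model: $E_0=\ZZ_2[[a]]$, with $E^0(BC_2)/I_{\tr}=E_0[d]/(d^3-ad-2)$.
\item The torsion point $e$ and the formal sum $x\Fplus{\GG}e$, both as series in $x$ with coefficients in $E_0[d]$.
\item The isogeny $\psi(x)$ in the target coordinate, taken from the formulas in \cite{rezk2008poweroperationsmoravaetheory}.
\end{enumerate}
Inverting $\psi$ then expresses $x^i+(x\Fplus{\GG}e)^i$ as a series in $\psi$. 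Reading off the coefficient of $\psi^n$ in the basis $1,d,d^2$ gives $Q_0,Q_1,Q_2$ on $b_n$ as $E_0$-linear combinations of the $b_i$. This is the integral statement \autoref{thm:E2bu}.

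For the mod $(2,a)$ table, I would reduce the integral formulas. Modulo $(2,a)$ we have $d^3=0$, $[2](x)\equiv x^4$, and $\psi$ becomes a purely inseparable ``half-Frobenius'' type map. The binomial expansions of $(x+\text{stuff})^i$ then have coefficients governed by Lucas's theorem, which depends only on the low binary digits of $i$. This is what produces the dependence on $i\bmod 4$ and indices shifted to $2i-1,\ldots,2i+3$. I would first verify the pattern for $i\le 8$ by direct expansion, and then prove it in general by writing $i=4m+r$ and factoring $(\cdot)^{4m}$ as a fourth power, which is additive mod 2.

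The main obstacle is $\theta(b_i)=(Q_0(b_i)-b_i^2)/2$. The term $b_i^2$ vanishes modulo decomposables, but $\theta(b_i)$ still requires $Q_0(b_i)$ modulo $4$, not just modulo $2$. Consequently, the expansions of $e$, $x\Fplus{\GG}e$ and $\psi^{-1}$ must be carried out over $\ZZ/4[[a]][d]/(d^3-ad-2)$ to sufficiently high order in $x$, and only afterwards reduced modulo $(2,a)$. Here $d^3\equiv2$ contributes genuinely, so I would track the relation $2=d^3-ad$ carefully. I would first try to organize the computation using the $2$-series and Rezk's closed form for $\psi$, in order to keep the order-$x^{2i+3}$ terms manageable.
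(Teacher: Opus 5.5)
There is a genuine gap. Your setup matches the paper's: the normalized product formula of \autoref{thm:nu}(3), matching coefficients of powers of the target coordinate, and splitting in the basis $1,d,d^2$. What is missing is any mechanism that yields a formula valid for \emph{all} $n$, and the step you lean on does not work as stated.

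First, you cannot ``invert $\psi$.'' In Rezk's normalization $\psi(u)=-u\cdot u(P+Q)$, the linear coefficient is $-u(Q)=-d$. This is a non-unit of $E_0[d]/(d^3-ad-2)$, and it is nilpotent modulo $(2,a)$, as it must be for a degree-$2$ isogeny. The expansion of $u^i+w^i$ (with $w=u(P+Q)$) in powers of $u'$ exists only because $u^i+w^i$ is invariant under translation by $Q$. Solving for its coefficients degree by degree divides by powers of $d$, so you cannot reduce modulo $(2,a)$ or $4$ first. Finite-order expansions of $w$ therefore cannot establish a statement about all $b_n$.

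Second, the only all-$n$ mechanism you offer is the mod-$2$ additivity of fourth powers. It presupposes exact, all-order knowledge of $w$ modulo $(2,a)$. It is also blind to the mod-$4$ information that, as you note yourself, determines $\theta=Q_0/2$. You name that obstacle but do not resolve it.

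The missing ingredient is an exact identity (Schumann): $w=\frac{d-u}{1+d^2u}$, hence $uw=-u'$ and $u+w=d+d^2u'$. Since $u^i+w^i$ is symmetric in $u,w$, Waring's formula expresses it as a polynomial in $u'$ over $\ZZ[d]$, integrally and for every $i$:
\[[b_i]P(b_n)=\sum_{j\ge 0}\left[\binom{i-j}{j}+\binom{i-j-1}{j-1}\right]\binom{i-2j}{n-j}d^{\,n+i-3j}.\]
Write $d^m=2D_{m-2}+D_md+D_{m-1}d^2$, where $D_1=1$, $D_2=D_{\le 0}=0$, and $D_m=aD_{m-2}+2D_{m-3}$ for $m\ge 3$. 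This puts an exact factor of $2$ on the $1$-component, together with the boundary term $2b_{2n}$ from $j=n$, $i=2n$. So $\theta$ is obtained by dividing by $2$ \emph{before} reducing, and the mod-$4$ problem never arises.

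Modulo $(2,a)$ one has $D_1=1$ and $D_m\equiv 0$ otherwise, and only $j\in\{n-1,n\}$ contribute. The table then follows from these parities:
\begin{itemize}
\item for $\theta$: $\binom{n+3}{3}+\binom{n+2}{3}\equiv\binom{n+2}{2}\pmod 2$, the coefficient of $b_{2n+3}$;
\item for $Q_1$: $2n+1$;
\item for $Q_2$: $2n-1$ and $(n+1)^2$.
\end{itemize}
No Lucas-type argument is needed.

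If you prefer not to cite Schumann, the identity has a short proof. The function $u=X/Y$ has degree $2$, and its fibres are the residual intersections of lines through the flex $T=[0:0:1]$. So $u$ is the quotient by the involution $P\mapsto -T-P$. Translation by $Q$ commutes with this involution because $2Q=O$. Hence $u(P+Q)$ is a M\"obius transformation of $u$, determined by $0\mapsto d$, $d\mapsto 0$, and $\infty\mapsto u(T+Q)=-1/d^2$.

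Two smaller corrections:
\begin{itemize}
\item For $E[BU]$ the product must be divided by $\prod_h f(h)$, so the linear part is $x^i+(x+_{\GG}e)^i-e^i$. This only changes constant terms.
\item There is no $\EE_\infty$ map $E[BU]\to MP[BU]$. The formula comes from rerunning the paper's argument with $\eta_L,\eta_R$ into $E\otimes MP$, the Thom isomorphism, and $E[BU]\to E[BU\times\ZZ]$, taking Rezk's computation of the operation on $u$ as input.
\end{itemize}
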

Using the canonical map $BSU\to BU$, we immediately determine closed formulas for the case of $E_2[BSU]$ (see \autoref{cor:E2bsu}). In particular, if we let $M=\widehat{Q}((E_2)_0(BSU))$ and $\varepsilon:\Delta\to \FF_2$ be the augmentation which kills $\theta$, $Q_1$, $Q_2$, and $(2,a)$, then we find that $\FF_2 \otimes_\Delta M\cong \FF_2\{d_2\}\oplus \FF_2\{d_3\}$ (cf. \autoref{rmk:admissiblebasis}). This seems to suggest that $M$ is finitely generated as a $\Delta$-module. One could further hope that $E_2[BSU]$ is $K(2)$-locally finite-celled as an $\EE_\infty$ $E_2$-algebra---analogous to $E_n[\ZZ]$ for all $n$, and in contrast to $KU[BSU]$---and that these are all instances of a single structural phenomenon (see \autoref{rmk:topologicallift} for a more detailed discussion). We plan to study $E_2[BSU]$ further in future work. 

To our knowledge, work on power operations on homology is less developed than the corresponding literature on cohomology. One reason for this discrepancy is that power operations on $E[X]$ depend on the $\EE_\infty$-space structure of $X$ itself, whereas this is not the case for $E^X$. For instance, if $X=BU$, then the splitting principle relates the computation of operations on $E^0(BU)\cong E_0[[c_1,c_2,\ldots]]$ to the computation of operations on $E^0(\CP)\cong E_0[[x]]$, and the latter is the content of the aforementioned calculation by Quillen \cite{quillen1971elementary} in the universal case $E=MP$. 

However, in the height 1 case, there have been some related computations of power operations on homology, performed by methods distinct from ours. In \cite[Corollary 6.3.6]{snaith2006dyer}, Snaith computed power operations on $KU_0(BU\times \ZZ;\ZZ/p)$ mod decomposables by directly analyzing the $H_\infty$ structure of $BU\times \ZZ$. For $KU_0(BU)\cong KU_0[b_1,b_2,\ldots]$, Laures~\cite[Theorem C]{laures2003splitting} gives a formula for a power series with coefficients in $\psi(b_i)$'s via tom Dieck operations; it closely resembles the one we obtain in \autoref{cor:mpbu}, specialized to the multiplicative formal group. From Laures' formula, Reeker~\cite[Corollary 3.1--3.3]{reeker2009k1localsubordism}
computes formulas for $\theta(b_r)$ mod $2$, $4$, and $16$, which agree with \autoref{thm:kubu}.

\subsection*{Acknowledgements}
I would like to thank my advisor Allen Yuan for his inspiring guidance and encouragement, and for suggesting this problem and many of the ideas which appear in this paper. I am also grateful to Sanath Devalapurkar, Yuqin Kewang, Akhil Mathew, and Eunice Sukarto for enlightening conversations and comments. This work was supported by NSF grant no. 2140001.

\section{Preliminaries} 
\subsection{Power operations} 
Let's begin by clarifying which $\EE_\infty$ ring structure we consider on $MP$, and then let's recall the construction of the total power operation. 

Let $J$ denote the complex $J$-homomorphism $BU\times\ZZ\to \mathrm{Pic}(\SS)$, or its restriction to the subspace $BU\subseteq BU\times \ZZ$ corresponding to vector spaces of virtual rank 0. The multiplicativity of the construction $V\mapsto S^V$ induces an $\EE_\infty$ structure on $J$, giving rise to $\EE_\infty$ Thom spectra:
\begin{defn}[\cite{may1977e}, \cite{lewis1986equivariant}, \cite{Antol_n_Camarena_2018}]
	Define the complex bordism spectrum $MU$ and the periodic complex bordism spectrum $MP$ as the $\EE_\infty$ Thom spectra
	\[MU = \colim(BU\rarr^J \mathrm{Pic}(\SS)\hookrightarrow \Sp),
	\qquad
	MP = \colim(BU\times\ZZ\rarr^J \mathrm{Pic}(\SS)\hookrightarrow \Sp).
	\]
	There is a canonical element $u\in \pi_2(MP)$ such that $MP_* \cong MU_*[u^\pm]$. 
\end{defn}

Now let $R$ be an $\EE_\infty$ ring spectrum, and let $X$ be a space. Let $x\in R^0(X)$ be a class, which is represented by a map $x:\Susp X\to R$. Then using the diagonal map $X\to X^{\times m}$ and the multiplication map $R^{\otimes m}\to R$, we can obtain the composite 
\[(\Susp X)_{h\Sigma_m}\to \Susp X^{\times m}_{h\Sigma_m}\to R^{\otimes m}_{h\Sigma_m}\to R\]
which represents a class $P_{\Sigma_m}(x)\in R^0(X\times B\Sigma_m)$. This process produces the \textit{total power operation}
\[P_{\Sigma_m}: R^0(X)\to R^0(X\times B\Sigma_m).\]
Restriction along $BC_p\hookrightarrow B\Sigma_p$ induces the \textit{$p$-cyclic \textup{or} $C_p$ power operation} \[P_{C_p}:R^0(X)\to R^0(X\times BC_p).\] These operations are multiplicative but not additive. For the algebro-geometric picture we require a ring homomorphism, which we now construct. The stable transfer $\Sigma^\infty_+ BC_p\to \SS$ induces a map $\tr:R^0(X)\to R^0(X\times BC_p)$, whose image generates the ideal $I_{\tr}\subseteq R^0(X\times BC_p)$. The failure of $P_{C_p}$ to be additive lies in this ideal, and so we obtain a ring homomorphism from the composite
\[P_R^X:R^0(X)\rarr^{P_{C_p}}R^0(X\times BC_p)\to R^0(X\times BC_p)/I_{\tr}\]
which we call the \textit{additive cyclic power operation}. This is the operation we will discuss for the remainder of the paper. When $R$ is clear from context, or $X$ is a point, we suppress it from the notation $P_R^X$. (For more details on power operations, see \cite{rezk2024lectures}.)

\subsection{Level structures} 
In this subsection, we define what we mean by a ``level structure'' in the context of this document. Recall the following construction:
\begin{defn}
	Let $A$ be a finite abelian group, and let $\GG$ be a formal group over a formal scheme $S$. Then we have the following functor from formal schemes to groups:
	\[\underline{\hom}(A,\GG)(T)=\{\mathrm{pairs}\; (i,\ell)\ST i:T\to S,\; \ell\in\hom(A,i^*\GG(T))\}.\]
\end{defn}
If $A=C_n$ and $\GG$ is a formal group over a ring $k$ with coordinate $x$, then $\underline{\hom}(C_n,\GG)$ is corepresentable, i.e.
\[\underline{\hom}(C_n,\GG)=\Spf(k[[z]]/[n](z))\]
where $z=x(\ell(1))$ and $1\in C_n$ denotes a chosen generator of $C_n$ (see \cite[Example 9.3]{ando2004sigmaorientationhinfinitymap}). 
\begin{defn}
	Let $\GG$ be a formal group over a ring $k$ with coordinate $x$. Then a \textit{level-$C_p$ structure} on $\GG$ is a point 
	\[\ell:C_p\to i^*\GG\]
	of $\underline{\hom}(C_p,\GG)$ such that 
	\[\langle p\rangle (z)=0,\]
	where $z=x(\ell(1))$ and $\langle p\rangle (x)$ is the unique power series with $x\cdot \langle p\rangle (x)=[p](x)$. This defines a subfunctor of $\ihom(C_p,\GG)$:
	\[\underline{\mathrm{level}}(C_p,\GG)=\Spf(k[[z]]/\langle p\rangle(z)).\]
	\label{def:level}
\end{defn} 
If $R$ is a 2-periodic even $\EE_\infty$ ring with formal group $\GG_R$ and $p$ is not a zero divisor in $R_0$, then we have an isomorphism of formal schemes over $\Spf(R_0)$
\[\underline{\mathrm{level}}(C_p,\GG_R)=\Spf(R^0(BC_p)/I_{\tr}),\]
where $I_{\tr}$ again denotes the ideal generated by the image of $\tr:R_0\to R^0(BC_p)$. This directly follows from the calculation that $R^0(BC_p)\cong R_0[[z]]/[p](z)$ and $\tr(1)=\langle p\rangle(z)$ (see \cite[Lemma 5.7 and Remark 6.15]{hopkins2000generalized}). 

\begin{rmk}
	In the case that $\GG$ is of finite height, this definition is equivalent to the one given by Drinfeld in \cite{drinfel1974elliptic} or Ando, Hopkins, and Strickland in \cite{ando2004sigmaorientationhinfinitymap}, as discussed in \cite[Example 9.22]{ando2004sigmaorientationhinfinitymap}. Since we'd like to discuss formal groups that are not necessarily of finite height, and the theory of divisors on (or quotients of) formal groups is not necessary to compute cyclic power operations, we have decided to avoid a more meaningful description of level structures. We hope to one day understand to what extent the subgroup picture for Morava $E$-theory can be extended to $MP$. 
\end{rmk}


\section{Operations on homology} 
\label{sec:theory}

In this section, we describe the algebro-geometric interpretations of the additive cyclic power operation $P$ on $MP\otimes X$, where $X=MP$, $\Susp (BU\times \ZZ)$, $\Susp BU$. Then we mention the analogous results for $E\otimes X$, where $E$ is any 2-periodic even $\EE_\infty$ ring. 

\subsection{The case of $MP$} 

Let $R=MP$. Recall that $MP_0\cong MU_*\cong\ZZ[x_1,x_2,\ldots]$ classifies formal group laws. Let us consider the additive cyclic power operation on $MP_0$:
\[P:MP_0\to MP^0(BC_p)/I_{\tr}.\]
After taking $\Spf$, we see that the power operation map involves the following data:
\[ \setlength\arraycolsep{0pt}
P:\begin{array}[t]{ccc}
	\Spec(MP_0) & \larr[3em] & \Spf(MP^0(BC_p)/I_\tr) \\[8pt]
	\left\{\begin{array}{c}
		\text{formal groups}\\
		\text{with a}\\
		\text{coordinate}
	\end{array}\right\}
	&
	\larr[2em]
	&
	\left\{\begin{array}{c}
		\text{formal groups with}\\
		\text{a coordinate and a}\\
		\text{level-$C_p$ structure}
	\end{array}\right\}
\end{array}
\]
To be more precise, let $\GG$ be the universal formal group over $MP_0$, and let 
\[i:MP_0\to MP^0(BC_p)/I_{\tr}\] 
denote inclusion of coefficients. Then let us consider the value of $P$ on  $(MP^0(BC_p)/I_{\tr})$-points. The universal point given by $\id: MP^0(BC_p)/I_{\tr}\to MP^0(BC_p)/I_{\tr}$ consists of the data
\[(\GG, x,\ell:C_p\to i^*\GG)\] 
which is sent to the point $(\GG',x')$, where $x'$ is some coordinate on $\GG'=P^*\GG$:
\[(\GG',x')\lmaps (\GG, x, \ell).\]
In order to give the coordinate $x'$ a meaning, we need to describe it in terms of $x$. Explicitly:
\begin{theorem}
	$P_{MP}$ behaves as follows on $\Spf$: 
	\[\setlength\arraycolsep{0pt}
	\begin{array}{ccc}
		\left(P^*\GG, \prod\limits_{a\in C_p}(x\fglaw_{\GG}x(\ell(a)))\right) & \lmaps^P & (\GG, x, \ell).
	\end{array}
	\]
	\label{thm:mpcase}
\end{theorem}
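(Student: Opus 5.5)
The plan is to reduce the statement to Quillen's computation of the total power operation on the canonical coordinate $x=x_{MP}\in \widetilde{MP}^0(\CP)$. Since $MP_0$ corepresents formal groups with a coordinate, a map $\Spf(MP^0(BC_p)/I_{\tr})\to\Spec(MP_0)$ is determined by two pieces of data. The first is the formal group $P^*\GG$. The second is the coordinate obtained by pulling back $x$ along $P$, namely $P^{\CP}(x)\in MP^0(\CP\times BC_p)/I_{\tr}$. The key input is that the universal coordinate is itself a cohomology class on $\CP=\GG$, so the power operation on spaces computes the coordinate on the target.

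The first step is to check that the additive power operation is natural in $X$ and compatible with the coefficient map. Concretely, we check that
\[P^{\CP}: MP^0(\CP)=MP_0[[x]]\to MP^0(\CP\times BC_p)/I_{\tr}\cong (MP^0(BC_p)/I_{\tr})[[x]]\]
is a ring map extending $P$ on $MP_0$ and continuous in $x$. The target identification is the Künneth isomorphism together with $\tr(1)=\langle p\rangle(z)$. We then check that $P^{\CP}$ respects the $H$-space structure of $\CP$, that is, the coproduct $x\mapsto x\fglaw_{\GG}y$. It follows that $P^{\CP}(x)$ is a coordinate on $P^*\GG$ base-changed to $MP^0(BC_p)/I_{\tr}$, and that this coordinate is exactly $x'$. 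So it suffices to compute $P^{\CP}(x)$.

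The second step is the computation itself, which follows Quillen. The class $x$ is the Euler class (Thom class restricted along the zero section) of the tautological line bundle $L$, because the complex orientation of $MP$ comes from the $\EE_\infty$ Thom spectrum structure. The $\EE_\infty$ structure of $MP$ is the multiplicativity of $J$, so the total power operation takes Thom classes to Thom classes. Hence $P_{C_p}(x)$ is the Euler class of the external tensor power $L^{\otimes p}$ over $\CP\times BC_p$, with $C_p$ permuting factors, pulled back along the diagonal. That bundle is $L\otimes\rho$, where $\rho$ is the regular representation of $C_p$, and $\rho$ splits as $\bigoplus_{a\in C_p}\lambda^a$ with $\lambda$ the tautological line bundle on $BC_p$. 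By the Whitney formula and the formal group law,
\[P_{C_p}(x)=\prod_{a}e(L\otimes \lambda^a)=\prod_a (x\fglaw_{\GG} [a](z)),\qquad z=e(\lambda).\]

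The last step is to identify this with the stated formula. In the universal $(MP^0(BC_p)/I_{\tr})$-point, the level structure $\ell$ has $x(\ell(1))=z$, and $\ell$ is a homomorphism, so $x(\ell(a))=[a](z)$. This gives $\prod_{a\in C_p}(x\fglaw_{\GG}x(\ell(a)))$ after passing to the quotient by $I_{\tr}$.

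The main obstacle is the Thom-class step. One must justify that the $\EE_\infty$ power operation on $MP$ sends the Thom class of $V$ to the Thom class of the equivariant bundle $V^{\otimes p}$ with its permutation action. This is a statement about the Thom spectrum functor being symmetric monoidal, applied to the $\EE_\infty$ map $J$. I would handle it by citing Quillen's construction \cite{quillen1971elementary}, or by noting directly that the extended power of $\mathrm{Th}(L)$ is the Thom spectrum of $L^{\otimes p}_{hC_p}$ via the multiplicativity of $J$. A secondary subtlety is that $MP^0(BC_p)$ has $p$-torsion issues, but working after quotienting by $I_{\tr}$ and using the isomorphism of \Cref{def:level} handles this cleanly.
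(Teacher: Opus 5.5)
Your proposal is correct and follows the same route as the paper. Naturality of the additive power operation along $MP_0\to MP^0(\CP)$ identifies the pulled-back coordinate $x'$ with $P^{\CP}(x)$, and Quillen's formula $P^{\CP}(x)=\prod_{a\in C_p}(x+_{\GG}[a](z))$ with $[a](z)=x(\ell(a))$ finishes; the only differences are that you sketch Quillen's Thom-class argument where the paper cites it, and you explicitly check the homomorphism property via the $H$-space structure of $\CP$, which the paper leaves implicit in its pullback diagram.

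One slip in your sketch: since $J$ is multiplicative for direct sum, the extended power of $\mathrm{Th}(L)$ is the Thom spectrum of the external direct sum of $p$ copies of $L$, not the external tensor power (which would be a line bundle), and it is this rank-$p$ bundle whose restriction to the diagonal $\CP\times BC_p$ is $L\otimes\rho$.
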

\begin{rmk}
	It is necessary to clarify our notation, particularly since this picture will continue to appear throughout the remainder of this paper. Strictly speaking, when we say that $x'=\prod(x +_{\GG}x(\ell(a)))$ is a coordinate on $P^*\GG$, we mean that there exists a homomorphism of formal groups $i^*\GG\to P^*\GG$ over $\Spf R^0(BC_p)/I_{\tr}$, and on coordinates, the map
	\[\mathcal{O}_{P^*\GG}=(R^0(BC_p)/I_{\tr})[[x']]\to (R^0(BC_p)/I_{\tr})[[x]]=\mathcal{O}_{i^*\GG}\]
	is given by sending $x'$ to $\prod(x+_{\GG}x(\ell(a)))\in (R^0(BC_p)/I_{\tr})[[x]]$. Furthermore, we use the symbol $x$ for the coordinates of both $\GG$ and $i^*\GG$. In the future, when we discuss other data classified by $R_0$, the notation is analogous. 
\end{rmk}
\begin{rmk}
	The algebro-geometric content of \Cref{thm:mpcase} is essentially a recollection of \cite[Sections 3.1--2]{ando2004sigmaorientationhinfinitymap}, but we do not require that $\GG_R$ is of finite height or that $R_0$ is a complete local ring with perfect residue field of characteristic $p$. This allows us to apply the level structures picture to $MP$ (and later to the homology of $BU\times \ZZ$). We also phrase this result as an explicit formula for the coordinate $x'$ on $P^*\GG$, so that it is directly suited to concrete calculation.
\end{rmk}
\begin{proof}[Proof of \autoref{thm:mpcase}]
	The following argument basically consists of writing down the idea: ``If $x$ is the coordinate, then $P^\CP(x)$ tells you what $P$ does with coordinates.'' 
	
	There is a natural diagram given by
	\begin{cd}
		MP^0(\CC P^\infty \times BC_p)/I_{\tr} & MP^0(\CC P^\infty) \\
		MP^0(BC_p)/I_{\tr} & MP_0
		\arr{1-2}{1-1}_{P^\CP}
		\arr{2-2}{2-1}_{P}
		\arr{2-2}{1-2}
		\arr{2-1}{1-1}
	\end{cd}
	which gives the following diagram after taking $\Spf$:
	\begin{cd}
		i^* \GG & \GG \\
		\Spf(MP^0(BC_p)/I_{\tr}) & \Spec MP_0.
		\arr{1-1}{1-2}^{P^\CP}
		\arr{2-1}{2-2}^{P}
		\arr{1-2}{2-2}
		\arr{1-1}{2-1}
	\end{cd}
	This diagram fits into the pullback diagram defining $P^*\GG$, giving us the unique morphism $i^*\GG\to P^*\GG$:
	\begin{cd}[column sep=1em, row sep=0.7em]
		i^* \GG & & \\
		& P^*\GG & \GG \\
		& \Spf(MP^0(BC_p)/I_{\tr}) & \Spec MP_0
		\arr[bend left=20, inner sep = 0]{1-1}{2-3}_{P^\CP}
		\arr[bend right]{1-1}{3-2}
		\arr[dashed]{1-1}{2-2}
		\arr{2-2}{2-3}
		\arr{2-2}{3-2}
		\arr[phantom, very near start]{2-2}{3-3}^\lrcorner
		\arr{3-2}{3-3}_{P}
		\arr{2-3}{3-3}
	\end{cd}
	Back in rings, the above diagram looks like the following:
	\begin{cd}[column sep= 0.7em, row sep=0.7em]
		MP^0(\CC P^\infty \times BC_p)/I_{\tr} & & \\
		& MP^0(\CC P^\infty)\underset{MP}{\hat\otimes}^P MP^0(BC_p)/I_{\tr} & MP^0(\CC P^\infty) \\
		& MP^0(BC_p)/I_{\tr} & MP_0
		\arr[bend right=15]{2-3}{1-1}^{P^\CP}
		\arr[bend left]{3-2}{1-1}
		\arr[dashed]{2-2}{1-1}
		\arr{2-3}{2-2}
		\arr{3-2}{2-2}
		\arr{3-3}{3-2}^{P}
		\arr{3-3}{2-3}
	\end{cd}
	where the symbol $\hat\otimes^P_{MP}$ in the pushout denotes that the $MP_0$ action on $MP^0(BC_p)$ is through $P$. Then if $x$ is the coordinate on $\GG$, the coordinates-level picture given by the top triangle in the above diagram looks like:
	\begin{cd}[column sep=7em]
		(MP^0(BC_p)/I_{\tr})[[x]]& \\
		(MP^0(BC_p)/I_{\tr})[[x']] & MP_0[[x]].
		\arr{2-2}{1-1}[sloped]^{P^\CP(x)\mapsfrom x}
		\arr[dashed]{2-1}{1-1}
		\arr{2-2}{2-1}^{x'\mapsfrom x}
	\end{cd}
	The dashed arrow therefore gives the morphism $i^*\GG\to P^*\GG$ in coordinates; i.e. it tells us that $x'=P^\CP(x)$. In order to finish the proof, we therefore require one external input: Quillen's classical computation (\cite{quillen1971elementary}, see also \cite[Lemma 2.5.6]{peterson2019formal}) that 
	\[P^\CP(x)=\prod\limits_{a\in C_p}(x\Fplus{MP}[a]_{MP}(z))\]
	where $z=x(\ell(1))$. But this is precisely what we wanted to show, since $[a]_{MP}(x(\ell(1)))=x(\ell(a))$.
\end{proof}

\subsubsection{$MP\otimes MP$} 
Now let $R=MP\otimes MP$, and let $\GG=\GG_R$ be the formal group over $R_0$ induced by the left unit map $\eta_L:MP\to MP\otimes MP$. Let's consider the maps $i,P$ given by the inclusion of coefficients and the power operation, respectively:
\[i,P:R_0\to R^0(BC_p)/I_{\tr}.\]
As before, let's see what $P$ means on $\Spf$. Recall that $MP_0(MP)$ classifies (not necessarily strict) isomorphisms of formal group laws, i.e. it corepresents $\mathrm{Iso}(F_{univ},F_{univ})$. In particular, we let $R_0\cong MP_0[b_0^\pm,b_1,\ldots]$, such that the universal (non-strict) isomorphism of formal group laws $\varphi$ is given by
\[\varphi(x)=b_0x+b_0b_1x^2+b_0b_2x^3+\ldots.\]
Hence, in terms of the universal point, the power operation involves the following data:
\[
P:\begin{array}[t]{ccc}
	\Spec(R_0) & \larr & \Spf(R^0(BC_p)/I_\tr) \\[8pt]
	\left(\GG', x',y'\right) & \lmaps & (\GG, x, y,\ell)
\end{array}
\]
where $\ell:C_p\to i^*\GG$ is a level structure, $x,y$ are coordinates on $\GG$, and $x',y'$ are coordinates on $\GG'$; in particular,  $y=\varphi(x)$ and $y'=\varphi'(x')$, where $\varphi'=P^*\varphi$. Then we have the following:
\begin{theorem}
	$P_{MP\otimes MP}$ behaves as follows on $\Spf$: 
	\[\left(P^*\GG, \prod\limits_{a\in C_p}(x\fglaw_{\GG}x(\ell(a))), \prod\limits_{a\in C_p}\varphi(x\fglaw_{\GG}x(\ell(a)))\right) \lmaps^P (\GG, x, \varphi(x), \ell).
	\]
	\label{thm:mpmp}
\end{theorem}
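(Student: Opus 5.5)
We reduce to \autoref{thm:mpcase} by exploiting naturality of power operations along the two $\EE_\infty$ ring maps $\eta_L,\eta_R:MP\to MP\otimes MP$. Since both unit maps are maps of $\EE_\infty$ rings, they commute with the total power operation and with transfers, hence with the additive cyclic power operation $P$. Also $P$ is natural in $X$, so it commutes with the induced maps $MP^0(\CP\times BC_p)/I_{\tr}\to R^0(\CP\times BC_p)/I_{\tr}$. The first component of the formula will come from $\eta_L$ and the third from $\eta_R$.

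\textbf{First and second components.} The formal group $\GG=\GG_R$ and its coordinate $x$ are by definition pulled back along $\eta_L$ from the universal pair over $MP_0$. Likewise, a level structure on $i^*\GG$ is pulled back from one on the universal formal group. By naturality, the square relating $\Spf$ of $P_R$ and of $P_{MP}$ commutes. Hence $P^*\GG$ with coordinate $x'$ is the pullback of the pair computed in \autoref{thm:mpcase}. Concretely, we rerun the proof of \autoref{thm:mpcase} verbatim with $R$ in place of $MP$. The coordinate $x$ gives a class in $R^0(\CP)$, the dashed map $i^*\GG\to P^*\GG$ is induced by $P^\CP_R$, and $x'=P^\CP_R(x)=\eta_L(P^\CP_{MP}(x_{MP}))$. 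Quillen's formula then yields $x'=\prod_{a\in C_p}(x\fglaw_{\GG}x(\ell(a)))$.

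\textbf{Third component.} The second coordinate $y$ on $\GG$ is the image under $\eta_R$ of the $MP$ coordinate. Here one must use the standard identification of $R^0(\CP)=R_0[[x]]$ with $R_0[[y]]$: the two complex orientations coming from $\eta_L$ and $\eta_R$ are related by the universal isomorphism, so $y=\varphi(x)$ as elements of $R^0(\CP)$. By the same pullback argument applied via $\eta_R$, we get $y'=P^\CP_R(y)$. The key point is that $P^\CP_R$ is a ring homomorphism $R^0(\CP)\to R^0(\CP\times BC_p)/I_{\tr}$ that is continuous in the $x$-adic topology. We write $y=\varphi(x)=\sum_{k\ge0}b_0b_kx^{k+1}$ with $b_0b_k\in R_0$ (taking the $k=0$ term as $b_0x$). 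Then
\[P^\CP_R(y)=\sum_{k\ge 0}P(b_0b_k)\,P^\CP_R(x)^{k+1}=(P^*\varphi)(x'),\]
which is the claimed form $\varphi'(x')$ with $\varphi'=P^*\varphi$. To match the statement as written, we must also identify $P^\CP_R(y)$ with $\prod_a\varphi(x\fglaw_\GG x(\ell(a)))$. For this we apply Quillen's formula in the $y$-coordinate, that is, after pulling back along $\eta_R$:
\[P^\CP(y)=\prod_{a}\bigl(y\Fplus{\GG}y(\ell(a))\bigr),\]
where the formal sum is taken in the formal group law attached to $y$. Since $\varphi$ is an isomorphism of formal group laws from the $x$-law to the $y$-law, we have $y\Fplus{}y(\ell(a))=\varphi(x\fglaw_\GG x(\ell(a)))$, which gives the formula.

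\textbf{Main obstacle.} The main obstacle is bookkeeping the identification of $R^0(\CP)$ and of $R^0(BC_p)$ in the two coordinates. In $y$-terms the level structure has $y(\ell(1))=\varphi(z)$, and the relation $\langle p\rangle_y(\varphi(z))=0$ holds in $R^0(BC_p)/I_{\tr}$ up to a unit, so both presentations define the same quotient. Checking that the transfer ideal is the same in both coordinates is where care is needed. I would verify it by noting that $I_{\tr}$ is defined coordinate-free via $\tr(1)$.
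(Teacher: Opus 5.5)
Your proposal is correct and follows the paper's proof. Both arguments use that $\eta_L,\eta_R$ are $\EE_\infty$ and hence commute with $P$, apply \autoref{thm:mpcase} in each coordinate to get $x'$ and $y'$, and then rewrite $y\fglaw_{\GG}y(\ell(a))=\varphi(x\fglaw_{\GG}x(\ell(a)))$ using that $\varphi$ is a homomorphism with $y=\varphi(x)$; your extra checks (the expansion $P^\CP_R(y)=(P^*\varphi)(x')$ and the coordinate-independence of $I_{\tr}$ via $\tr(1)$) are bookkeeping the paper leaves implicit.
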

\begin{proof}
	Consider the left and right unit maps
	\[\eta_L,\eta_R:MP\to MP\otimes MP\]
	which are $\EE_\infty$ and therefore commute with power operations. This fact can be combined with the argument of the previous section to prove the claim. In particular, consider the following commutative diagram:
	\begin{cd}
		MP_0  & MP_0MP & MP_0\\
		MP^0(BC_p)/I_{\tr} & (MP\otimes MP)^0(BC_p)/I_{\tr} & MP^0(BC_p)/I_{\tr}
		\arr{1-1}{1-2}^{\eta_L}
		\arr{2-1}{2-2}^{\eta_L}
		\arr{1-3}{1-2}_{\eta_R}
		\arr{2-3}{2-2}_{\eta_R}
		\arr{1-1}{2-1}^{P_{MP}}
		\arr{1-2}{2-2}^{P_{MP\otimes MP}}
		\arr{1-3}{2-3}^{P_{MP}}
	\end{cd}
	Recall that $\eta_L,\eta_R:MP_0\to MP_0MP$ classify forgetful maps to the source and target, respectively; i.e. on $\Spec$, $\eta_L$ sends the data of an isomorphism of formal group laws $\varphi: F\rwe F'$ to $F$, and $\eta_R$ sends $\varphi$ to $F'$.
	 
	 Hence, on points of $\Spf$, the above diagram yields the following:
	\begin{cd}
		(\GG',x')  & (\GG',x',y') & (\GG',y')\\
		(\GG,x,\ell) & (\GG,x,y,\ell) & (\GG,y,\ell)
		\maps{1-2}{1-1}
		\maps{2-2}{2-1}
		\maps{1-2}{1-3}
		\maps{2-2}{2-3}
		\maps{2-1}{1-1}
		\maps{2-2}{1-2}
		\maps{2-3}{1-3}
	\end{cd}
	From \autoref{thm:mpcase}, we know that 
	\[x'=\prod\limits_{a\in C_p}(x\fglaw_{\GG}x(\ell(a))),\quad y'=\prod\limits_{a\in C_p}(y\fglaw_{\GG}y(\ell(a)))\]
	and since $\varphi$ is a homomorphism, and $y=\varphi(x)$, we can write
	\begin{align*}
		y'&=\prod\limits_{a\in C_p}(y\fglaw_{\GG}y(\ell(a)))\\
		&=\prod\limits_{a\in C_p}(\varphi(x)\fglaw_{\GG}\varphi(x(\ell(a))))\\
		&=\prod\limits_{a\in C_p}\varphi(x\fglaw_{\GG}x(\ell(a)))
	\end{align*}
	as desired.
\end{proof}

\subsubsection{$MP[BU\times \ZZ]$} 

Let's now interpret power operations for 
\[R=MP[BU\times \ZZ],\]
where we again let $\GG=\GG_R$ be the formal group over $R_0$ induced by the left unit map. We use the convention that $R_0\cong MP_0[b_0^\pm,b_1,\ldots]$, where the Thom isomorphism \[\alpha:MP[BU\times\ZZ]\rwe MP\otimes MP\] 
sends $b_i\mapsto b_i$. 

Though this exercise is effectively trivial because the Thom isomorphism is an $\EE_\infty$ equivalence, the algebro-geometric role of $MP[BU\times \ZZ]$ is different from that of $MP\otimes MP$, and so we'll go through this process explicitly. 

Recall that $\Spf(R_0)\cong \Map(\GG,\GG_m)$; i.e. $R_0$ classifies unpointed maps $\GG\to \GG_m$, where the universal map to $\GG_m$ is given by 
\[\gamma(x)=b_0+b_0b_1x+b_0b_2x^2+\ldots.\]
Again, let $i:R_0\to R^0(BC_p)/I_\tr$ denote inclusion of coefficients. In terms of the universal point, we have that the power operation involves the following data:
\[
P:\begin{array}[t]{ccc}
	\Spf(R_0) & \larr & \Spf(R^0(BC_p)/I_\tr) \\[8pt]
	\left(\GG', x',\gamma'\right) & \lmaps & (\GG, x, \gamma,\ell)
\end{array}
\]
where $\ell:C_p\to i^*\GG$ is a level structure, and $\gamma,\gamma'$ are unpointed maps $\GG\to \GG_m$ and $\GG'\to \GG_m$, respectively. Hence, the Thom isomorphism gives us the following corollary:
\begin{cor}
	$P_{MP[BU\times \ZZ]}$ behaves as follows on $\Spf$: 
	\[\left(P^*\GG, \prod\limits_{a\in C_p}(x\fglaw_{\GG}x(\ell(a))), \prod\limits_{a\in C_p}\gamma(x\fglaw_{\GG}x(\ell(a)))\right)  \lmaps^P  (\GG, x, \gamma, \ell).
	\]
	\label{cor:mpbuz}
\end{cor}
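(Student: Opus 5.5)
The plan is to transport \autoref{thm:mpmp} across the Thom isomorphism $\alpha:MP[BU\times\ZZ]\rwe MP\otimes MP$. Because $\alpha$ is an equivalence of $\EE_\infty$ rings, it commutes with the total power operation, the transfer, and hence with the additive operation $P$. It also intertwines the left units, so it identifies the formal groups $\GG$ on both sides and their pullbacks along $BC_p$. This gives a commutative square relating $P_{MP[BU\times\ZZ]}$ and $P_{MP\otimes MP}$, with both horizontal maps isomorphisms: $\alpha_0$ on $\pi_0$ and the induced map on $(-)^0(BC_p)/I_{\tr}$. On $\Spf$, the points $(\GG,x,\gamma,\ell)$ and $(\GG,x,\varphi,\ell)$ therefore correspond exactly when $\gamma$ and $\varphi$ correspond under $\alpha$. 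So the whole statement reduces to recording how $\alpha$ translates a coordinate-change $\varphi$ into a map $\gamma:\GG\to\GG_m$.

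Step one fixes that dictionary. With the convention $b_i\mapsto b_i$, the universal $\varphi(x)=b_0x+b_0b_1x^2+\cdots$ corresponds to $\gamma(x)=b_0+b_0b_1x+\cdots$, so that $\varphi(x)=x\cdot\gamma(x)$. More invariantly, under the Thom isomorphism a unit function $\gamma$ on $\GG$ corresponds to the new coordinate $y=x\gamma(x)$. I would take this as the standard description of $\alpha$ on $\pi_0$ (the Thom class twisted by $\gamma$), consistent with the stated conventions.

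Step two applies \autoref{thm:mpmp}. It says $P$ sends $(\GG,x,\varphi(x),\ell)$ to $(P^*\GG,x',y')$ with $x'=\prod_a(x+_\GG x(\ell(a)))$ and $y'=\prod_a\varphi(x+_\GG x(\ell(a)))$. Writing $y'=x'\gamma'(x')$ and substituting $\varphi(t)=t\gamma(t)$ with $t=x+_\GG x(\ell(a))$ gives
\[y'=\prod_a (x\fglaw_\GG x(\ell(a)))\,\gamma(x\fglaw_\GG x(\ell(a))) = x'\cdot\prod_a\gamma(x\fglaw_\GG x(\ell(a))).\]
Dividing by $x'$, which is legitimate because $x'$ is a coordinate and both sides lie in $x'\cdot(\text{units})$, gives $\gamma'=\prod_a\gamma(x+_\GG x(\ell(a)))$ as a function pulled back along $i^*\GG\to P^*\GG$. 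This is exactly the claimed formula.

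The main obstacle is step one: justifying that the $\EE_\infty$ Thom isomorphism induces $\gamma\leftrightarrow x\gamma(x)$ on $\pi_0$, compatibly with the identification of the left-unit formal groups. I would handle it by evaluating $\alpha$ on $MP_0(\CP)$, i.e.\ on the Thom class of the tautological bundle. Everything after that is formal.
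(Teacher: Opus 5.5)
Your proposal is correct and takes essentially the same route as the paper: you transport \autoref{thm:mpmp} across the $\EE_\infty$ Thom isomorphism $\alpha$, use the dictionary $\varphi(x)=x\gamma(x)$, and cancel $x'=\prod_a(x+_{\GG}x(\ell(a)))$ from $\varphi'(x')=\prod_a\varphi(x+_{\GG}x(\ell(a)))$. The paper treats what you call the main obstacle as immediate from the convention $\alpha(b_i)=b_i$: it checks on $S$-points that $f\circ\alpha$ classifies $f^*\varphi(x)/x$, so your separate Thom-class computation on $MP_0(\CP)$ is not needed.
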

\begin{proof}
Because the Thom isomorphism 
\[\alpha:MP[BU\times\ZZ]\to MP\otimes MP\] 
is an $\EE_\infty$ map, it commutes with the power operation. Furthermore, on points of $\Spf$, it is given by ``dividing by $x$'', i.e. 
\[\setlength\arraycolsep{0pt}
\begin{array}[t]{ccc}
	\Spf(MP_0(BU\times\ZZ)) & \larr[3em]^{\alpha} & \Spec(MP_0MP) \\
	\left(\frac{\varphi(x)}{x}=b_0+b_0b_1x+\ldots=\gamma(x)\right) & \lmaps & \left(\varphi(x)=b_0x+b_0b_1x^2+\ldots\right)
\end{array}\]
by definition. More explicitly, an $S$-point $f:MP_0MP\to S$ corresponds to the formal group law isomorphism 
\[f^*\varphi(x)=f(b_0)\bigg(x+ \sum_{i\geq 1} f(b_i) x^{i+1}\bigg).\] 
It is mapped to the point $f\circ \alpha$ of $\Map(\GG,\GG_m)$, which is given by 
\[(f\circ\alpha)^*\gamma(x)=f(\alpha(b_0))\bigg(1+\sum_{i\geq 1} f(\alpha(b_i))x^i\bigg)=f(b_0)\bigg(1+\sum_{i\geq 1} f(b_i)x^i\bigg)=\frac{f^*\varphi(x)}{x}.\] 
Hence by applying the Thom isomorphism to \autoref{thm:mpmp}, we obtain the corresponding diagrams
\[
\begin{il-cd}[column sep = 1 em]
	MP_0MP & (MP^{\otimes 2})^0(BC_p)/I_{\tr}\\
	R_0 & R^0(BC_p)/I_{\tr}
	\arr{1-1}{1-2}
	\arr{2-1}{2-2}
	\arr{2-1}{1-1}^\alpha
	\arr{2-2}{1-2}^\alpha
\end{il-cd}
\leftrightsquigarrow
\begin{il-cd}[column sep = 1 em]
	(\GG', x',\varphi'(x')) & (\GG, x,\varphi(x),\ell)\\
	\left(\GG', x',\frac{\varphi'(x')}{x'}\right) & \left(\GG, x,\frac{\varphi(x)}{x},\ell\right)
	\maps{1-2}{1-1}
	\maps{2-2}{2-1}
	\maps{1-1}{2-1}
	\maps{1-2}{2-2}
\end{il-cd}
\]
and so
\[\gamma'(x')=\frac{\varphi'(x')}{x'}=\frac{\prod_{a\in C_p}\varphi(x+_{\GG}x(\ell(a)))}{\prod_{a\in C_p}(x+_{\GG}x(\ell(a)))}=\prod\limits_{a\in C_p}\gamma(x\fglaw_{\GG}x(\ell(a)))\]
as desired. 
\end{proof}

\subsubsection{$MP[BU]$} 
Now let $R=MP[BU]=MP\otimes \Susp BU$, with $\GG=\GG_R$ being the formal group over $R_0$ induced by the left unit map. Again, consider the maps given by the inclusion of coefficients and the power operation:
\[i,P:R_0\to R^0(BC_p)/I_{\tr}.\]
Let's see what $P$ means on $\Spf$. Recall that $\Spf(R_0)\cong\Map_0(\GG,\GG_m)$. Hence,  the power operation involves the following data:
\[
P:\begin{array}[t]{ccc}
	\Spf(R_0) & \larr & \Spf(R^0(BC_p)/I_\tr) \\[8pt]
	\left(\GG', x',h'\right) & \lmaps & (\GG, x, h,\ell)
\end{array}
\]
where $h$ and $h'$ are now pointed maps $\GG\to\GG_m$ and $\GG'\to\GG_m$, respectively, where the universal pointed map to $\GG_m$ is given by 
\[h(x)=1+b_1x+b_2x^2+\ldots.\] 
Then we have the following corollary:
\begin{cor}
	$P_{MP[BU]}$ behaves as follows on $\Spf$: 
	\[\left(P^* \GG, \prod\limits_{a\in C_p}(x\fglaw_{\GG}x(\ell(a))), \prod\limits_{a\in C_p}\tfrac{h(x+_{\GG}x(\ell(a)))}{h(x(\ell(a)))}\right)  \lmaps^P  (\GG, x, h, \ell).
	\]
	\label{cor:mpbu}
\end{cor}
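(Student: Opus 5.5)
Deduce \Cref{cor:mpbu} from \Cref{cor:mpbuz} by naturality along the $\EE_\infty$ map $j:\Susp BU\to \Susp(BU\times\ZZ)$ induced by the inclusion of the rank-$0$ component. Since $j$ comes from a map of $\EE_\infty$ spaces, the induced map $j:MP[BU]\to MP[BU\times\ZZ]$ is an $\EE_\infty$ ring map and commutes with the additive cyclic power operations. The difficulty is that the universal pointed map $h$ is not obviously the restriction of the unpointed $\gamma$ in a way compatible with $P$. Instead, use the splitting $BU\times\ZZ\simeq BU\times\ZZ$ as $\EE_\infty$ spaces, or more cleanly a retraction. Concretely, on $\Spf$ the map $j$ should send an unpointed map $\gamma$ to the pointed map $h(x)=\gamma(x)/\gamma(0)$, i.e.\ $b_0^{-1}\gamma$, with $b_0=\gamma(0)$. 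I would verify this on homology: $MP_0(BU)\to MP_0(BU\times\ZZ)$ sends $b_i\mapsto b_0^{-1}b_i$ in the normalization $\gamma=b_0(1+b_1x+\dots)$ (up to whether $j$ is really a map to or from). The map to $BU\times \ZZ$ is $\Spf(MP_0(BU\times\ZZ))\to\Spf(MP_0BU)$, so an unpointed $\gamma$ maps to a pointed $h$. The natural candidate is the quotient by the $\ZZ$-component, $BU\times\ZZ\to BU$, $(V,n)\mapsto V-n$. This is an $\EE_\infty$ (infinite loop) map, and it sends $\gamma\mapsto\gamma/\gamma(0)$. It is also split surjective on $\pi_0$ of the ring, which gives injectivity: $MP_0BU\to MP_0(BU\times\ZZ)$ is the inclusion $b_i\mapsto b_i$ into $MP_0[b_0^\pm,b_1,\dots]$.

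Granting this, run the same square as in the proof of \Cref{cor:mpbuz}. The quotient map $q:MP[BU]\to MP[BU\times\ZZ]$ on rings is injective after passing to $R^0(BC_p)/I_{\tr}$, since $R_0$ for $BU\times\ZZ$ is a Laurent polynomial extension of that for $BU$ and the level-structure quotients are base changes. The pointed $h'$ attached to $P(h)$ is therefore determined by its image. Starting from $(\GG,x,h,\ell)$, view $h$ as the unpointed map $\gamma=h$ with $b_0=1$. By \Cref{cor:mpbuz} this goes to $\gamma'(x')=\prod_a h(x+_\GG x(\ell(a)))$. Pointing it, i.e.\ applying $q$ on the target side, divides by $\gamma'(0)=\prod_a h(x(\ell(a)))$, because $x'=0$ corresponds to $x=0$ on the image of $i^*\GG$. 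This gives exactly the stated $h'=\prod_a h(x+_\GG x(\ell(a)))/h(x(\ell(a)))$.

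The main obstacle is the bookkeeping of the compatibility square. The point of $MP_0(BU\times\ZZ)$ with $\gamma=h$ is \emph{not} in the image of $q^*$ in general. The argument should instead go: $q$ is an $\EE_\infty$ ring map $MP[BU\times\ZZ]\to MP[BU]$ (pushforward along $BU\times\ZZ\to BU$), whose effect on $\Spf$ sends a pointed $h$ over $MP_0BU$ to itself regarded as unpointed. Check this direction carefully: pushforward sends $b_0\mapsto 1$ and $b_i\mapsto b_i$. Then $P_{MP[BU]}(b_i)=q(P_{MP[BU\times\ZZ]}(b_i))$. The naive answer would be $\prod h(x+x(\ell a))$, which is not pointed, so $b_0\mapsto1$ must be reconciled with $P(b_0)$. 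Note that $P(b_0)=\gamma'(0)=\prod_a\gamma(x(\ell(a)))$, while $q(b_0)=1$ forces this product to equal $P(1)=1$ after $q$. Hence in $MP[BU]$ the identity $\prod_a h(x(\ell(a)))=1$ must hold. I would sanity-check this identity directly, since $\prod_a h(x(\ell a))$ is a norm of a unit along the level structure, and if it fails I would adjust to the quotient map $(V,n)\mapsto V-n$ via the inclusion $j$ instead. That map realizes exactly the division by $h'(0)$, and the product formula for $h'$ follows formally from \Cref{cor:mpbuz}.
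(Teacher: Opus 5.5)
\textbf{There is a genuine error, though a correct proof is already in your second paragraph.} Read with $q$ taken to be the inclusion $j$, that paragraph is essentially the paper's proof. The only $\EE_\infty$ map needed is the inclusion of the $0$-component $j=i_0\colon MP[BU]\to MP[BU\times\ZZ]$. On $\Spf$ it sends $\gamma\mapsto\gamma/\gamma(0)$, so $h'=\gamma'/\gamma'(0)$ with $\gamma'$ given by \autoref{cor:mpbuz}, and $b_0$ cancels in each ratio.

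The problem is that your third paragraph abandons this for a route that fails. The rank-normalization map $BU\times\ZZ\to BU$, $(V,n)\mapsto V-n$, is not an $\EE_\infty$ map. An $\EE_\infty$ retraction of $BU\to BU\times\ZZ$ would split $ku\simeq\tau_{\geq 2}ku\oplus H\ZZ$, contradicting the nontriviality of the $k$-invariant $H\ZZ\to\Sigma^3H\ZZ$ of $ku$.

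Contrary to your first paragraph, this map also does not divide by $\gamma(0)$ on $\Spf$. It induces $MP_0(BU\times\ZZ)\to MP_0(BU)$, $b_0\mapsto 1$, $b_i\mapsto b_i$, so on $\Spf$ it merely regards a pointed $h$ as unpointed.

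The identity $\prod_{a}h(x(\ell(a)))=1$ you derive is exactly this failure surfacing. At $p=2$ it says $h(z)=1$ in $MP_0(BU)[[z]]/\langle 2\rangle(z)$. This is false already modulo $(2,x_1,x_2,\ldots)$, where that ring becomes $\FF_2[b_1,b_2,\ldots][[z]]$. Equivalently, $P(b_0)=b_0\gamma(z)$, as in the paper's example, is sent to $h(z)\neq 1=P(1)$ by $b_0\mapsto 1$. So that route cannot be repaired, and you must work with $j$, as your last sentence half-suggests.

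Your worry about the second paragraph is unfounded, because that argument never needs the retraction to commute with $P$. Let $r$ be the purely algebraic ring map $b_0\mapsto 1$, $b_i\mapsto b_i$, extended coefficientwise in $z$ to $R^0(BC_p)/I_{\tr}$. This extension is legitimate since $\langle p\rangle(z)$ has coefficients in $MP_0$. This is what ``view $h$ as the unpointed map $\gamma=h$'' means, and all it does is produce a lift of the universal point. Since $r\circ j=\mathrm{id}$ and $j$ commutes with $P$,
\[P_{MP[BU]}=r\circ j\circ P_{MP[BU]}=r\circ P_{MP[BU\times\ZZ]}\circ j.\]
Hence
\[1+\sum_{i\geq 1}P_{MP[BU]}(b_i)(x')^i=r\bigl(\gamma'(x')/\gamma'(0)\bigr)=\prod_{a\in C_p}\tfrac{h(x+_{\GG}x(\ell(a)))}{h(x(\ell(a)))}.\]
Alternatively, use the injectivity of $j$ on $R^0(BC_p)/I_{\tr}$, which the same $r$ proves. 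This is the paper's argument with its implicit lifting step made explicit.

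One small slip: in the normalization $\gamma=b_0(1+b_1x+\cdots)$ one has $j(b_i)=b_i$, not $b_0^{-1}b_i$. You state this correctly a few lines later.
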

\begin{proof}
	This argument is similar to the one for \autoref{cor:mpbuz}. It follows from the fact that the $\EE_\infty$ ``inclusion into the 0-component'' map
	\[i_0:MP[BU]\to MP[BU\times \ZZ]\]
	has an algebro-geometric interpretation given by ``dividing by the value at identity'' (see \cite[Remark 5.2.4]{peterson2019formal} for more details), i.e.
	\[\setlength\arraycolsep{0pt}
	\begin{array}[t]{ccc}
		\Spf(MP_0(BU)) & \larr[3em]^{i_0} & \Spf(MP_0(BU\times\ZZ)) \\
		\left(\frac{\gamma(x)}{\gamma(0)}=1+b_1x+\ldots=h(x)\right) & \lmaps & \left(\gamma(x)=b_0+b_0b_1x+\ldots\right),
	\end{array}\]
	which therefore gives us the corresponding diagrams
	\[
	\begin{il-cd}[column sep = 0.5 em, cramped]
		MP_0(BU\times \ZZ) & (MP[BU\times \ZZ])^0(BC_p)/I_{\tr}\\
		R_0 & R^0(BC_p)/I_{\tr}
		\arr{1-1}{1-2}
		\arr{2-1}{2-2}
		\arr{2-1}{1-1}^{i_0}
		\arr{2-2}{1-2}^{i_0}
	\end{il-cd}
	\leftrightsquigarrow
	\begin{il-cd}[column sep = 1 em]
		(\GG', x',\gamma') & (\GG, x,\gamma,\ell)\\
		\left(\GG', x',\frac{\gamma'}{\gamma'(0)}\right) & \left(\GG, x,h,\ell\right)
		\maps{1-2}{1-1}
		\maps{2-2}{2-1}
		\maps{1-1}{2-1}
		\maps{1-2}{2-2}
	\end{il-cd}
	\]
	and hence we have that 
	\[h'(x')=\frac{\gamma'(x')}{\gamma'(0)}=\prod\limits_{a\in C_p}\frac{\gamma(x+_{\GG}x(\ell(a)))}{\gamma(x(\ell(a)))}=\prod\limits_{a\in C_p}\frac{h(x+_{\GG}x(\ell(a)))}{h(x(\ell(a)))}.\qedhere\]
\end{proof}

\subsubsection{The non-universal case}
It is important to note that none of our arguments above were special to $MP$. Specifically, the same arguments apply to $R=E\otimes MP$, $E[BU\times \ZZ]$, and $E[BU]$, where $E$ is a 2-periodic even $\EE_\infty$ ring. The only difference is that we no longer write an explicit formula for $x'$, as $P^{\CP}(x)$ must be computed separately (just as we cited Quillen's computation for $MP$). Below, we compile statements of previous results in the non-universal case:
\begin{theorem}
	\label{thm:nu}
	Let $E$ be a 2-periodic even $\EE_\infty$ ring. Let $\GG_E=\Spf(E^0(\CP))=\Spf(E_0[[x]])$ and let $F$ denote the associated formal group law. We consider the power operation $P_R$ in three cases below: $R=E\otimes MP$, $E[BU\times\ZZ]$, $E[BU]$. In each case, let $(\GG,x)$ denote the pullback of $(\GG_E,x)$ along $E\to R$, and let $\GG'=(P_R)^*\GG$ and $x'=P_R^\CP(x)$. 
	\begin{enumerate}
		\item Let $E_0(MP)=E_0[b_0^\pm,b_1,\ldots]$, which corepresents $\mathrm{Iso}(F,F_{univ})$ via the universal isomorphism $F\cong F_{univ}$
		\[\varphi(x)=b_0x+b_0b_1x^2+\ldots.\]
		Then $P_{E\otimes MP}:\Spf(E_0MP)\leftarrow \Spf((E\otimes MP)^0(BC_p)/I_{\tr})$ does the following on points:
		\[\left(\GG', x', \prod\limits_{a\in C_p}\varphi(x\fglaw_{\GG}x(\ell(a)))\right) \lmaps (\GG, x, \varphi(x), \ell).
		\]
		\item Let $E_0(BU\times \ZZ)=E_0[b_0^\pm,b_1,\ldots]$, which corepresents $\Map(\GG,\GG_m)$ via the universal (unpointed) map $\GG\to \GG_m$
		\[\gamma(x)=b_0+b_0b_1x+\ldots.\] 
		Then $P_{E[BU\times \ZZ]}:\Spf(E_0(BU\times \ZZ))\leftarrow \Spf((E[BU\times \ZZ])^0(BC_p)/I_{\tr})$ does the following on points:
		\[\left(\GG', x', \prod\limits_{a\in C_p}\gamma(x\fglaw_{\GG}x(\ell(a)))\right)  \lmaps  (\GG, x, \gamma, \ell).
		\]
		\item Let $E_0(BU)=E_0[b_1,b_2,\ldots]$, which corepresents $\Map_0(\GG,\GG_m)$ via the universal pointed map $\GG\to \GG_m$
		\[h(x)=1+b_1x+\ldots.\] 
		Then $P_{E[BU]}:\Spf(E_0(BU))\leftarrow \Spf((E[BU])^0(BC_p)/I_{\tr})$ does the following on points:
		\[\left(\GG', x', \prod\limits_{a\in C_p}\tfrac{h(x+_{\GG}x(\ell(a)))}{h(x(\ell(a)))}\right)  \lmaps  (\GG, x, h, \ell).
		\]
	\end{enumerate}
\end{theorem}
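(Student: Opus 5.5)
The plan is to rerun the arguments of \autoref{thm:mpcase}, \autoref{thm:mpmp}, \autoref{cor:mpbuz} and \autoref{cor:mpbu} with $E$ in place of $MP$ as the coefficient theory. The only genuinely new input is a replacement for Quillen's formula.

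\emph{Step 1 (coordinate).} For $R$ any of $E\otimes MP$, $E[BU\times\ZZ]$, $E[BU]$, the ring $R$ is a 2-periodic even $\EE_\infty$ ring. Its formal group $\GG$ is pulled back from $\GG_E$ along the $\EE_\infty$ unit $E\to R$, and $R^0(\CP)=R_0[[x]]$. Assume $p$ is not a zero divisor in $R_0$, or simply work with $R^0(BC_p)/I_{\tr}$ formally, so that $\Spf(R^0(BC_p)/I_{\tr})$ is the level scheme as in \Cref{def:level}. The naturality square with $P^\CP$ and $P$ from the proof of \autoref{thm:mpcase} then goes through verbatim. The pullback property of $P^*\GG$ yields the morphism $i^*\GG\to P^*\GG$, which on coordinates sends $x'\mapsto P_R^\CP(x)$. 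This is exactly the definition of $x'$ in the statement, so no formula like Quillen's is needed here.

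\emph{Step 2 (case (1)).} Use the $\EE_\infty$ maps $\eta_L\colon E\to E\otimes MP$ and $\eta_R\colon MP\to E\otimes MP$. Both commute with power operations. On $\Spf$, $\eta_L$ records $(\GG,x)$ and $\eta_R$ records $(\GG,y=\varphi(x))$, where $y$ is the image of $x_{MP}$. The $MP$-side coordinate transforms by \autoref{thm:mpcase} pulled back along $\eta_R$:
\[
y'=\prod_{a\in C_p}\bigl(y\fglaw_{\GG}y(\ell(a))\bigr).
\]
Note that Quillen's formula is intrinsic to $MP$'s coordinate, so it transports along $\eta_R$. Since $\varphi$ is an isomorphism of formal groups and $y(\ell(a))=\varphi(x(\ell(a)))$, we get $y'=\prod\varphi(x\fglaw_{\GG}x(\ell(a)))$. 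Compatibility of the level structure $\ell$ under both maps holds because $\ell$ is a point of $\GG$ itself, independent of coordinate.

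\emph{Step 3 (cases (2), (3)).} Apply the $E$-linear $\EE_\infty$ Thom isomorphism $E[BU\times\ZZ]\simeq E\otimes MP$, obtained by tensoring $\alpha$ with $E$. It divides by $x$, so dividing $y'$ by $x'$ gives (2). Then apply $i_0\colon E[BU]\to E[BU\times\ZZ]$, which divides by the value at $0$. Since $(x\fglaw_{\GG}x(\ell(a)))|_{x=0}=x(\ell(a))$, this gives (3).

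The main obstacle is Step 2. There one must check that in $E\otimes MP$ the power operation on $P^\CP(y)$ really is given by Quillen's product in terms of $y$ and $\ell$. This relies on naturality of $P^\CP$ along $\eta_R$, together with identifying the level structure pulled back from $MP^0(BC_p)/I_{\tr}$ with the one on $\GG$. The latter identification needs $\eta_R$ to send $z=x_{MP}(\ell(1))$ to $y(\ell(1))$, which holds by construction.
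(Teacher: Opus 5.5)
Your Steps 1 and 2 are sound and follow what the paper intends; its justification of \autoref{thm:nu} is just the remark that the $MP$ arguments carry over. Case (1) only uses Quillen's formula for the $MP$-coordinate $y=\eta_R(x_{MP})$, transported along the $\EE_\infty$ map $\eta_R$, and never needs a formula for $x'=P^{\CP}_R(x)$. The real gap is in Step 3, not Step 2, and it has two parts.

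\emph{The Thom isomorphism you invoke is not available.} Tensoring $\alpha$ with $E$ gives an $\EE_\infty$ equivalence $E\otimes MP[BU\times\ZZ]\simeq E\otimes MP\otimes MP$, not $E[BU\times\ZZ]\simeq E\otimes MP$. The latter requires base changing $\alpha$ along an $\EE_\infty$ map $MP\to E$. Conversely, any $\EE_\infty$ equivalence $E\otimes MP\simeq E[BU\times\ZZ]$ yields such a map, via $MP\to E\otimes MP\simeq E[BU\times\ZZ]\to E$. So you are silently assuming an $\EE_\infty$ complex orientation of $E$, which the theorem does not assume.

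\emph{Even with an orientation, the division step fails for the statement's coordinate.} Given an orientation $f$, that Thom isomorphism divides by the coordinate $x_f$ of $f$. Dividing $y'$ by $x'$ then yields $\prod_a\gamma(x+_{\GG}x(\ell(a)))$ only because $P^{\CP}(x_f)=\prod_a(x_f+_{\GG}x_f(\ell(a)))$, by Quillen's formula transported along $f$. That is exactly the kind of formula you said in Step 1 is unavailable. For the arbitrary coordinate $x$ in the statement one gets instead
\[
\frac{y'}{x'}=\prod_{a\in C_p}\gamma(x\fglaw_{\GG}x(\ell(a)))\cdot\frac{\prod_{a\in C_p}(x\fglaw_{\GG}x(\ell(a)))}{P^{\CP}_R(x)},
\]
and the last factor is not $1$ in general. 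For Rezk's coordinate $u$ on $E_2$, where $P^{\CP}_E(u)=-u\cdot u(P+Q)$, it equals $-1$, so your recipe would give $\gamma'$ the wrong sign. Thus Step 3 proves (2) and (3) only when $x$ is the coordinate of an $\EE_\infty$ orientation, as for $KU$ in \autoref{thm:kubu]}. It does not cover coordinates such as $u$, to which the paper later applies (3). The paper's remark that ``the same arguments apply'' glosses over the same point.

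The fix is to prove (2) without the Thom isomorphism. In $R=E[BU\times\ZZ]$, the class $\gamma\in R^0(\CP)$ is induced by the space-level map $\CP\to BU\times\{1\}$. Hence $P_{C_p}(\gamma)$ is induced by the map $\CP\times BC_p\to BU\times\ZZ$ classifying $L\otimes\rho\cong\bigoplus_{a}L\otimes\chi^a$, where $\rho$ is the regular representation. Whitney sum gives the product on $E[BU\times\ZZ]$, and for any complex orientation the Euler class of $L\otimes\chi^a$ is $x+_{\GG}x(\ell(a))$. Therefore $P^{\CP}(\gamma)=\prod_a\gamma(x+_{\GG}x(\ell(a)))$, and your Step 1 identifies $P^{\CP}(\gamma)$ with the pullback of $\gamma'$ along $x'\mapsto P^{\CP}_R(x)$.

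Alternatively, to stay within the paper's ``maps commute with $P$'' approach, push forward along the $\EE_\infty$ maps $E[BU\times\ZZ]\to E\otimes MP[BU\times\ZZ]\xrightarrow{E\otimes\alpha}E\otimes MP\otimes MP$. There $\gamma$ becomes a ratio of two $MP$-coordinates, each obeying Quillen's formula by your Step 2. Then descend using that the first map is split injective on $\pi_0$ as a map of $E_0(BU\times\ZZ)$-modules, hence injective on $(-)^0(\CP\times BC_p)/I_{\tr}$. Case (3) then follows from $i_0$ exactly as you wrote.
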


\section{Example calculations}
\label{sec:example} 
Let's now explicitly demonstrate how one uses these algebro-geometric descriptions to compute power operations on $R$. The general principle is as follows: one starts with some expression $f$ relating to $\GG_R$ (such as a coordinate on $\GG_R$, a map $\GG_R\to \GG_m$, etc.). Then the algebro-geometric picture gives some formula for the pullback $(P_R)^* f$ of $f$ along the power operation $P_R$. By comparing $(P_R)^* f$ to this formula and matching coefficients, we can compute the power operation on the classes that appear in the expression $f$.

\subsection{The $C_2$ power operation on $x_1\in MP_0$} 
We begin with a simple example computation that we can compare to Quillen's formula. Let $MP_0=\ZZ[x_1,x_2,\ldots].$ Let the group law of $\GG=\GG_{MP}$ be given by
\[x+_\GG y=x+y+x_1xy+\ldots.\] 
Our goal is to compute the additive $C_2$ power operation $P(x_1)$ using the description we gave earlier.

Recall that the homomorphism $i^*\GG\to P^*\GG$ is given in coordinates by $x'=x(x+_\GG z)=:\psi(x)$. We may as well use the 2-series for ease of computation; in particular, 
\[[2]_{P^*\GG}(\psi(x))=\psi([2]_\GG(x)).\]
Thus we can line up the coefficients of $x^2$ to find $P(x_1)$ after expanding out the expression above:
\[2x(x+_\GG z)+P(x_1)(x(x+_\GG z))^2+\ldots=(x+_\GG x)(x+_\GG x+_\GG z).\]
Specifically, in collecting $x^2$ terms, we obtain that the above equation becomes the following:
\[2(xz+x^2a_1)+P(x_1)x^2z^2=4x^2a_1 +x_1x^2z+2xz\]
where $a_1\in MP_0[[z]]$ denotes the coefficient of $x$ in $x+_\GG z$. Simplifying, we obtain that
\[P(x_1)=\frac{1}{z^2}(x_1z+2a_1).\]
Let's now compare this with a formula of Johnson--Noel derived from Quillen's computation.

\begin{fact}[Lemma 5.15 of \cite{Johnson_2010} at $p=2$] 
	We begin by explaining some notation. Let $\alpha=(\alpha_0,\alpha_1,\ldots)$ be a multi-index and $\bar{\alpha}=(\alpha_1,\ldots)$. Let $|\alpha|=\sum_{i\geq 0}\alpha_i$. Let $|\alpha|'=\sum_{i\geq 0}i\alpha_i$. If $y_0,y_1,\ldots$ is a list of variables, then let $y^\alpha=y_0^{\alpha_0}y_1^{\alpha_1}\cdots$. Let $\mu(n;\bar{\alpha})$ be the coefficient of $b^{\bar\alpha}$ in $(1+b_1+b_2+\cdots)^n$. Then
	\[z^{2n}P([\CC P^n])=\sum_{|\alpha|=n}a^\alpha s_\alpha[\CC P^n]\]
where $a_i$ is the coefficient of $x^i$ in $x+_\GG z$, and $s_\alpha$ is the $\alpha$-indexed Landweber--Novikov operation. Also, one can compute that 
\[s_\alpha[\CC P^n]=\mu(-(n+1);\bar\alpha)[\CC P^{n-|\alpha|'}].\]
\end{fact}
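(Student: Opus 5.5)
The plan is to obtain both identities from Quillen's geometric description of the cyclic power operation on bordism classes, and then to check the first one against \autoref{thm:mpcase}.

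\textbf{Step 1: $P$ on a bordism class.} Represent $w\in MU_{2n}$, viewed in $MP_0$ via $u$, as $f_*1$ for a closed stably almost complex manifold $M$ of real dimension $2n$. The external $C_2$-power of $f$ is the $C_2$-equivariant map $M\times M\to \mathrm{pt}$. Restricting along the diagonal $M\to M\times M$ and modding out by $I_{\tr}$ (the free part contributes only transfers), we get
\[
P(w)\cdot(\text{unit from the base})=f_*\big(e(\nu_{\Delta})\big),
\]
where $\nu_\Delta\cong TM\otimes\bar\rho$ is the normal bundle of the diagonal and $\bar\rho$ is the sign representation of $C_2$. This is the standard Tom Dieck--Quillen formula: $P$ commutes with Gysin maps up to the Euler class of the normal bundle tensored with the reduced regular representation. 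After twisting by periodicity, this makes the power operation on the stable normal bundle the relevant one. I would adopt Quillen's convention of working with $\nu_M$ rather than $TM$, so that Landweber--Novikov operations appear directly.

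\textbf{Step 2: splitting and Landweber--Novikov.} By the splitting principle, write the Chern roots of $\nu_M$ as $x_1,\dots,x_m$. The Euler class of $\nu_M\otimes\bar\rho$ is $\prod_j (x_j+_\GG z)$, using $[{-1}](z)=z$ modulo $\langle 2\rangle(z)$ up to units. This is consistent with \autoref{thm:mpcase}, where $x'=x(x+_\GG z)$. Now expand
\[
x_j+_\GG z=z+a_1x_j+a_2x_j^2+\cdots = z\Big(1+\sum_{i\ge 1}\tfrac{a_i}{z}\,x_j^i\Big)+(a_0-z)\cdots .
\]
Collecting the product over roots into monomial symmetric functions $c_\alpha(\nu_M)$ shows that $f_*$ of this product is $\sum_\alpha a^\alpha\, f_*(c_\alpha(\nu_M))=\sum_\alpha a^\alpha s_\alpha(w)$. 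Here $\alpha_0$ counts the factors contributing $a_0=z$.

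The degree bookkeeping fixes $|\alpha|=n$ and contributes the normalizing factor $z^{2n}$: the virtual rank of $TM\otimes\bar\rho$ is $n$, and there is a further factor from the periodicity twist. I expect this bookkeeping to be the main obstacle. The issues are pinning down exactly why the sum is over $|\alpha|=n$ with prefactor $z^{2n}$, and reconciling the sign and conjugation conventions between tangent and normal bundles. My first attempt would be to fix conventions by checking $n=1$ against the earlier computation $P(x_1)=z^{-2}(x_1z+2a_1)$, using $[\CC P^1]=-x_1$ up to sign.

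\textbf{Step 3: $s_\alpha[\CC P^n]$.} The stable normal bundle of $\CC P^n$ is $-(n+1)L$, so its total Landweber--Novikov class is $\big(1+b_1x+b_2x^2+\cdots\big)^{-(n+1)}$ with $x^{n+1}=0$. The coefficient of $b^{\bar\alpha}$ is $\mu(-(n+1);\bar\alpha)\,x^{|\alpha|'}$. Finally, $f_*(x^k)$ is represented by the linear subspace $\CC P^{n-k}\subseteq\CC P^n$, since $x$ is Poincar\'e dual to a hyperplane. This gives $s_\alpha[\CC P^n]=\mu(-(n+1);\bar\alpha)[\CC P^{n-|\alpha|'}]$.
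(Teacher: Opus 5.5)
The paper gives no proof of this Fact; it quotes Johnson--Noel, who derive it from Quillen's formula for the power operation on a Gysin pushforward. So your route is the expected one, and your Step~3 is correct. The gap is in Step~1, and it is not a matter of conventions.

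For $f\colon M\to\mathrm{pt}$, the fixed locus of the swap on $M\times M$ is the diagonal, with normal bundle $TM\otimes\bar\rho$. The localization argument (the free part dies once $z$ is inverted) gives
\[
P([M])=f_*\bigl(e(TM\otimes\bar\rho)^{-1}\bigr)=f_*\bigl(e(\nu_M\otimes\bar\rho)\bigr)\qquad\text{in } MU^*(BC_2)[z^{-1}].
\]
So the Euler class of the normal bundle of the diagonal enters in the denominator, not the numerator. Expanding $e(TM\otimes\bar\rho)$ instead produces the tangential numbers $f_*(c_{\bar\alpha}(TM))$. These are tied to $s_\alpha[M]=f_*(c_{\bar\alpha}(\nu_M))$ only through $c(TM)=c(\nu_M)^{-1}$, and they are not $\pm s_\alpha[M]$ in general.

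Already for $M=\CC P^1$ the two disagree. Since $T\CC P^1\cong\mathcal{O}(2)$ and $x^2=0$, one has $e(T\CC P^1\otimes\bar\rho)=2x+_{\GG}z=z+2a_1x$. Your formula therefore yields $z[\CC P^1]+2a_1$, whereas the correct value is $z^2P([\CC P^1])=z[\CC P^1]-2a_1$. No unit from the base reconciles these: under the multiplicative group law ($[\CC P^1]\mapsto 1$, $z\mapsto 2$, $a_1\mapsto -1$) the first is $0$ and the second is $4$.

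The normalization you defer is the real content of the first identity, and it follows from the corrected formula with no periodicity input. Write $\nu_M=\nu-\CC^N$, with $\nu$ an honest bundle of rank $N-n$ and Chern roots $x_1,\dots,x_{N-n}$, and put $a^{\bar\beta}=a_1^{\beta_1}a_2^{\beta_2}\cdots$. Since $a_0=z$,
\[
e(\nu_M\otimes\bar\rho)=z^{-N}\prod_{j=1}^{N-n}(x_j+_{\GG}z)=z^{-N}\sum_{|\beta|=N-n}a^{\beta}c_{\bar\beta}(\nu)=\sum_{\bar\beta}z^{-n-|\bar\beta|}a^{\bar\beta}c_{\bar\beta}(\nu_M).
\]
Hence $z^{2n}P([M])=\sum_{\bar\beta}z^{n-|\bar\beta|}a^{\bar\beta}s_{\bar\beta}[M]$. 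The terms with $|\bar\beta|>n$ vanish, because $|\beta|'\geq|\bar\beta|>n$ puts $s_{\bar\beta}[M]$ in $MU_{2n-2|\beta|'}=0$. What remains is exactly $\sum_{|\alpha|=n}a^\alpha s_\alpha[M]$ with $\alpha_0=n-|\bar\alpha|$.

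Inverting $z$ is harmless. We have $z\langle 2\rangle(z)=[2](z)=0$, and $z$ is a non-zero-divisor in $MP_0[[z]]/\langle 2\rangle(z)$, so $MP^0(BC_2)/I_{\tr}$ injects into the localization.

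Finally, fixing signs by matching $n=1$ against $P(x_1)=z^{-2}(x_1z+2a_1)$ cannot replace this step. It does not see the tangent/normal discrepancy in higher degrees. It would also make the paper's comparison circular, since the Fact is used there precisely as an independent check of that computation.
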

We're working with the convention that $x_1=-[\CC P^1]$. Since only multi-indices $(1,0,\ldots)$ and $(0,1,0,\ldots)$ give nonzero operations, we obtain that
\[z^2P(-x_1)=a_0s_{(1,0,\ldots)}(-x_1)+a_1s_{(0,1,0,\ldots)}(-x_1)=zs_{(1,0,\ldots)}(-x_1)+a_1 s_{(0,1,0,\ldots)}(-x_1).\]
Then looking at coefficients of $(1+b_1+b_2+\ldots)^{-2}$, we find that $s_{(1,0,\ldots)}(-x_1)=-x_1$ and $s_{(0,1,0,\ldots)}(-x_1)=-2$, and we therefore obtain the desired result: 
\[P(-x_1)=\frac{-1}{z^2}(x_1z+2a_1).\]

\subsection{The $C_2$ power operation on $b_1\in MP_0(MP)$} 
For ease of calculation, let us proceed with the identification 
\[MP_0(MP)\cong MP_0(BU\times \ZZ)\cong MP_0[b_0^\pm,b_1,\ldots].\]
Our goal in this example is to compute $P(b_1)$, where $P$ is the additive $C_2$ power operation. As in \autoref{cor:mpbuz}, let us denote $\gamma(x)=b_0+b_0b_1x+b_0b_2x^2+\ldots,$
and let $x'=x(x+_\GG z)$. We saw that 
\[\gamma'(x')=P(b_0)+P(b_0b_1)x'+P(b_0b_2)(x')^2+\ldots=\gamma(x)\gamma(x+_\GG z).\]
Comparing constant terms, we see that $P(b_0)=b_0\gamma(z)$. Since the power operation is multiplicative, 
\[1+P(b_1)(x(x+_\GG z))+P(b_2)(x(x+_\GG z))^2+\ldots=\frac{\gamma(x)\gamma(x+_\GG z)}{b_0\gamma(z)}.\]
Thus we match the coefficients of $x$ on both sides, and obtain that 
\[P(b_1)=\frac{1}{z}\left(b_1+a_1 \frac{\gamma^{(1)}(z)}{\gamma(z)}\right)\]
where $\gamma^{(1)}(z)$ is the formal derivative of $\gamma(z)$, and $a_1\in (MP\otimes MP)_0[[z]]$ denotes the coefficient of $x$ in $x+_\GG z$.

\section{Results for $KU[BU]$, $E_2[BU]$, and $E_2[BSU]$} 
\label{sec:calc}
In this section, we derive closed formulas for power operations on $KU[BU]$, $E_2[BU]$, and $E_2[BSU]$. In particular, we compute the additive $C_2$ power operation $P$ on $b_i$ mod decomposables for both $KU_0(BU)$ and $(E_2)_0(BU)$, where $E_2$ is Morava $E$-theory of height $2$ at $p=2$ with Lubin--Tate parameter $a$. Results for $E_2[BU]$ then immediately give us formulas for $E_2[BSU]$.

Note that throughout this paper, we use the convention that $\binom{a}{b}=0$ if $a<0$, $b<0$, or $a<b$.

\subsection{Height 1: $KU[BU]$} 

Let us consider the additive $C_2$ power operation $P$ on $KU[BU]$, where $KU$ is complex $K$-theory:
\[P:KU_0[b_1,b_2,\ldots]\to KU_0[b_1,b_2,\ldots][[z]]/(z-2).\]
Let $\theta$ be the operation such that $P(x)=x^2+2\theta(x).$ Then we have the following result:
\begin{theorem} $\theta$ acts on the generators as follows:
	\[\theta(b_n)=\sum_{i\geq n}(-1)^n 2^{i-n-1}\left[{i\choose n}+{i-1\choose n-1}\right]b_{n+i} \mod (b_1,b_2,\ldots)^2.\]
	In particular, if we also take coefficients mod 2, then
	\[\theta(b_n)=b_{2n}+b_{2n+1} \mod 2,(b_1,b_2,\ldots)^2.\]
	\label{thm:kubu}
\end{theorem}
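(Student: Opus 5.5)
The plan is to specialize part (3) of \autoref{thm:nu} to $E=KU$ and then compare coefficients, as described at the start of \Cref{sec:example}.

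\textbf{Setting up the formal group data.} I will use the coordinate $x=1-L$ on $\GG_{KU}$. Since the target is $KU_0[b_1,\ldots][[z]]/(z-2)$, the matching group law is $x+_\GG y=x+y-xy$. Then $[2](z)=2z-z^2$, so $\langle 2\rangle(z)=2-z$ and the level structure is $z=2$. On coefficients $P$ is the identity on $KU_0=\ZZ$. On $\CP$ the operation $P^\CP$ is $\psi^2$, so $P^\CP(x)=1-L^2=2x-x^2$. This equals $x(x+_\GG 2)$, consistent with \autoref{thm:mpcase}. Hence $x'=y:=2x-x^2$ and $x+_\GG z=2-x$.

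\textbf{Reducing to an identity of power series.} Write $h(x)=1+H(x)$ with $H(x)=\sum_{m\ge1}b_mx^m$. By \autoref{thm:nu}(3),
\[\sum_{n\ge0}P(b_n)\,y^n=\frac{h(x)\,h(2-x)}{h(2)}.\]
Modulo $(b_1,b_2,\ldots)^2$ we have $P(b_n)\equiv 2\theta(b_n)$ for $n\ge1$, since $b_n^2$ is decomposable. The right-hand side becomes $1+H(x)+H(2-x)-H(2)$. So it suffices to expand $x^m+(2-x)^m-2^m$ as a series in $y$. (All comparisons are coefficientwise in $x$, i.e.\ in the appropriate completion, which is why infinite sums of $b$'s appear.)

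The key observation is that $y$ is invariant under $x\mapsto 2-x$. Setting $u=1-x$ gives $u^2=1-y$, and
\[x^m+(2-x)^m=(1+u)^m+(1-u)^m.\]
This is a polynomial in $u^2=1-y$. Therefore $2\theta(b_n)\equiv\sum_m c_{m,n}b_m$, where $c_{m,n}$ is the coefficient of $y^n$ in $(1+u)^m+(1-u)^m$ for $n\ge1$.

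\textbf{Computing the coefficients.} This is the main computational step. Rather than summing $\sum_k\binom{m}{2k}\binom{k}{n}(-1)^n$ directly, I will use a generating function in $m$:
\[\sum_m s^m\,\frac{(1+u)^m+(1-u)^m}{2}=\frac{1-s}{(1-s)^2-s^2u^2}=\frac{1-s}{1-2s+s^2y}=\frac{1-s}{1-2s}\sum_n\Big(\frac{-s^2y}{1-2s}\Big)^n.\]
The coefficient of $s^my^n$ is
\[(-1)^n[s^{m-2n}]\,(1-s)(1-2s)^{-n-1}=(-1)^n\Big[\tbinom{m-n}{n}2^{m-2n}-\tbinom{m-n-1}{n}2^{m-2n-1}\Big].\]
Putting $m=n+i$ and using Pascal's rule, this becomes $(-1)^n2^{i-n-1}\big[\binom{i}{n}+\binom{i-1}{n-1}\big]$, which is the claimed coefficient of $b_{n+i}$ in $\theta(b_n)$. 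The main care needed is tracking signs and the conventions for out-of-range binomials.

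\textbf{Mod $2$ reduction.} For the mod $2$ statement, only $i=n$ and $i=n+1$ can give odd coefficients. At $i=n$ the coefficient is $\tfrac12\cdot 2=1$. At $i=n+1$ it is $(n+1)+n=2n+1$, which is odd. For $i\ge n+2$ the power of $2$ is at least $2^{1}$. This yields $\theta(b_n)\equiv b_{2n}+b_{2n+1}$.
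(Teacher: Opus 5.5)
Your proof is correct and is essentially the paper's argument. It uses the same specialization of \autoref{thm:nu}(3) to the coordinate $x=1-L$ with $z=2$, the same reduction modulo decomposables to expanding $x^m+(2-x)^m-2^m$ in powers of $x'=x(2-x)$, and the same coefficient matching; your derivation of $x'=2x-x^2$ via $\psi^2$ is a fine substitute for the paper's equivariant $K$-theory remark. The one difference is that the paper simply cites Waring's formula with $A=x$, $B=2-x$ (so $A+B=2$, $AB=x'$), whereas you obtain the needed expansion from the generating function $\frac{1-s}{1-2s+s^2y}$; this is exactly the power-sum generating function behind Waring's formula in the case $A+B=2$, so your version is self-contained but not a genuinely different route.
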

\vspace{-1em}
\begin{proof}
	Let $h(x)=1+b_1x+b_2x^2+\cdots$. Let $\GG$ be the multiplicative formal group with group law given by $x+_\GG z=x+z-xz$, i.e. the coordinate arising from the class $1-L$ where $L$ is the tautological bundle over $\CP$. One can explicitly compute that
	\[P(x)=x(x+_\GG z),\] 
	e.g. by considering the class $(1-L)^{\otimes 2}$ in the $C_2$-equivariant $K$-theory of $\CP$; in fact, the aforementioned group law actually arises from an $\EE_\infty$ orientation on $KU$, e.g. see \cite[Theorem 6.5.3]{balderrama2023algebraictheoriespoweroperations}. Then applying \autoref{thm:nu}, we have that $P^*h(x)=h'(x')$ is given by the following expression:
	\[1+P(b_1)x(x+_\GG z)+P(b_2)(x(x+_\GG z))^2+\ldots=\frac{h(x)h(x+_\GG z)}{h(z)}.\]
	By rearranging terms and matching coefficients, we can therefore derive a formula for $P(b_n)$. 
	
	In particular, let $y:=x+_\GG z$ and $x':=xy=x(x+_\GG z)$. If we reduce mod decomposables, then 
	\[(h(z))^{-1}=1-b_1z-b_2z^2-\ldots.\] 
	Hence we can reduce our equation to the following:
	\[\sum_{n\geq 1}P(b_n)(x')^n=\sum_{j\geq 1}(x^j+y^j-z^j)b_j,\]
	and identifying $b_j$ coefficients, we obtain 
	\[\sum_{n\geq1} [b_j]P(b_n)(x')^n=x^j+y^j-z^j,\]
	where $[b_j]P(b_n)$ denotes the coefficient of $b_j$ in $P(b_n)$. We can now prove the theorem with a simple application of Waring's formula, a combinatorial identity that expresses power sums in terms of elementary symmetric polynomials (see e.g. \cite{gould1999girard}):
	\begin{fact}[Waring's formula]
		\label{fact:waring}
		Let $A$, $B$ be indeterminates. Then for all $j\geq 1$,
		\[A^j+B^j=\sum_{k\geq 0}\left[\binom{j-k}{k}+\binom{j-k-1}{k-1}\right](-AB)^k(A+B)^{j-2k}.\]
	\end{fact}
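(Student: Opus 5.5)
We prove Waring's formula by induction on $j$, using the Newton-type recurrence for power sums in two variables. Write $e_1=A+B$ and $e_2=AB$, and set $p_j=A^j+B^j$. Since $A$ and $B$ are the roots of $T^2-e_1T+e_2$, we have
\[p_j=e_1p_{j-1}-e_2p_{j-2}\qquad (j\geq 2),\]
with $p_0=2$ and $p_1=e_1$. Let $c(j,k)=\binom{j-k}{k}+\binom{j-k-1}{k-1}$, so the claim is $p_j=\sum_{k\geq 0}c(j,k)(-e_2)^k e_1^{j-2k}$. The convention that $\binom{a}{b}=0$ for $a<0$, $b<0$ or $a<b$ makes each sum finite: for $j\geq 1$ both binomials vanish once $2k>j$, so no negative powers of $e_1$ occur.

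\emph{Base cases $j=1,2$.} For $j=1$, only $k=0$ contributes, with $c(1,0)=\binom{1}{0}+\binom{0}{-1}=1$, giving $e_1=p_1$. For $j=2$, we have $c(2,0)=1$ and $c(2,1)=\binom{1}{1}+\binom{0}{0}=2$, giving $e_1^2-2e_2=p_2$. The restriction $j\geq 1$ matters here: at $j=0$ the formula would give $1$ rather than $p_0=2$. This is why we take both $j=1$ and $j=2$ as base cases rather than starting from $p_0$.

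\emph{Inductive step, $j\geq 3$.} Substitute the formulas for $p_{j-1}$ and $p_{j-2}$ into the recurrence. In $-e_2p_{j-2}$, reindex by $k\mapsto k-1$. Then both $e_1p_{j-1}$ and $-e_2p_{j-2}$ contribute terms of the form $(-e_2)^k e_1^{j-2k}$, and it suffices to prove the coefficient identity
\[c(j,k)=c(j-1,k)+c(j-2,k-1)\qquad\text{for all }k\geq 0.\]
This follows by applying Pascal's rule $\binom{n}{m}=\binom{n-1}{m}+\binom{n-1}{m-1}$ separately to each binomial in $c(j,k)$:
\[\binom{j-k}{k}=\binom{(j-1)-k}{k}+\binom{(j-2)-(k-1)}{k-1},\]
\[\binom{j-k-1}{k-1}=\binom{(j-1)-k-1}{k-1}+\binom{(j-2)-(k-1)-1}{k-2}.\]
Summing these two lines gives exactly $c(j-1,k)+c(j-2,k-1)$.

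\emph{Main obstacle: boundary cases.} The only real work is checking that Pascal's rule remains valid under the zero-binomial convention at the edges. For $k=0$, the first identity reads $\binom{j}{0}=\binom{j-1}{0}+\binom{j-1}{-1}$, which holds since $j-1\geq 0$. In the second identity every term has negative lower index, so both sides are $0$. For $k\geq 1$, the upper indices $j-k$ and $j-k-1$ may be negative or smaller than the lower index. In those cases we check directly that Pascal's rule still holds, which it does for upper index at least $0$. When the upper index is negative, all terms vanish. The only delicate instance is $\binom{0}{0}=\binom{-1}{0}+\binom{-1}{-1}$, which would fail, but it never arises because $j\geq 3$ keeps the relevant upper indices nonnegative whenever the lower index is $0$. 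Once these edge cases are settled, the induction closes and the formula holds for all $j\geq 1$.
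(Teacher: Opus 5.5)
Your proof is correct. The paper gives no proof of this statement: it quotes Waring's formula as a classical fact, with a reference to Gould. There the formula is commonly written with coefficient $\frac{j}{j-k}\binom{j-k}{k}$, which agrees with the bracket $\binom{j-k}{k}+\binom{j-k-1}{k-1}$ because $\binom{j-k-1}{k-1}=\frac{k}{j-k}\binom{j-k}{k}$ for $0\leq k\leq j/2$.

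Your route is an induction on the two-term recurrence $p_j=e_1p_{j-1}-e_2p_{j-2}$. It reduces everything to the Pascal-type identity $c(j,k)=c(j-1,k)+c(j-2,k-1)$, and so proves the integral form directly under the paper's zero-binomial convention. It also shows clearly why the hypothesis $j\geq 1$ is needed. Among $j\geq 2$, the coefficient recurrence fails only at $(j,k)=(2,1)$: there $c(2,1)=2$ but $c(1,1)+c(0,0)=1$, which mirrors $c(0,0)=1\neq p_0=2$. So taking both $j=1$ and $j=2$ as base cases is necessary, not merely convenient. The citation buys brevity; your argument buys a self-contained check of exactly the form the paper uses.

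One small wording slip. Under the convention, Pascal's rule holds for every lower index once the upper index is at least $1$, not $0$. At upper index $0$ it fails precisely for lower index $0$. That is the exceptional case you then isolate and correctly exclude: for $j\geq 3$, a lower index of $0$ occurs only with upper index $j$ (first identity, $k=0$) or $j-2\geq 1$ (second identity, $k=1$).
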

	Let $A=x$, $B=y$. Mod transfers, $z=2$ and $x+y=2$, so we have the following identity:
	\[2^j+\sum\nolimits_{n\geq1} [b_j]P(b_n)(x')^n =x^j+y^j=\sum_{k\geq 0}\left[\binom{j-k}{k}+\binom{j-k-1}{k-1}\right](x')^k(-1)^k2^{j-2k}.\]
	Now matching coefficients of $(x')^n$, we obtain the following:
	\[P(b_n)=\sum_{j\geq 2n}(-1)^n2^{j-2n}\left[\binom{j-n}{n}+\binom{j-n-1}{n-1}\right]b_j.\]
	Then we reindex by $j-n=i$ and divide by 2 to obtain the desired expression for $\theta$:
	\[\theta(b_n)=\sum_{i\geq n}(-1)^n 2^{i-n-1}\left[{i\choose n}+{i-1\choose n-1}\right]b_{n+i} \mod (b_1,b_2,\ldots)^2.\qedhere\]
\end{proof}

\subsection{Height 2: $E_2[BU]$ and $E_2[BSU]$} 
In this section, we describe the computation of power operations for $E[BU]$ and $E[BSU]$, where $E=E_2$ is a Morava $E$-theory of height 2 at $p=2$. 

In general, the computation of power operations at height 2 is feasible because there exist models of Morava $E$-theory at any prime given by supersingular elliptic curves. Specifically, one can obtain a height 2 formal group by taking the formal completion of a supersingular curve $C$ at the identity. 

Then there is a universal deformation of $C$ (given by a particular family of curves parameterized by a variable $a$); the associated formal group of this deformation yields a model of $\GG_E$ with Lubin--Tate parameter $a$. One can compute the universal order $p$ subgroup $H$ of $C$, and determine the universal isogeny on $C$ with kernel given by $H$; this isogeny allows one to compute the $\Sigma_p$ power operation on $E$ (see \cite{Zhu} for more details).

We proceed by using the elliptic curve model for $E$ given by Rezk in \cite{rezk2008poweroperationsmoravaetheory}: let $E_0=\ZZ_2[[a]]$, and consider the curve $C$ over $\ZZ_2[a]$ defined by projective equation 
\[Y^2Z+aXYZ+YZ^2-X^3=0.\] 
To arrive at the formal group associated to this curve, we consider an affine neighborhood about the identity $[0:1:0]\in\PP^2$, and coordinates $u=X/Y, v=Z/Y$, giving us the equation
\[v^2+auv+v=u^3\]
which has identity at $(u,v)=(0,0)$. The completion of this curve at the identity gives a universal deformation over $E_0$ with coordinate $u$. Rezk finds that the universal order two subgroup $H$ of $C$ is defined over 
\[\ZZ_2[[a]][d]/(d^3-ad-2)\cong E^0(BC_2)/I_{\tr},\] 
with generator $Q$ given by $(u(Q),v(Q))=(d,-d^3)$. Note that $\tr(1)$ is not $d^3-ad-2$, but these rings are isomorphic because they classify the same moduli problem. 

Let $P$ be a point such that $(u(P),v(P))=(u,v)$. By examining the universal isogeny of $C$ with kernel $H$, Rezk computes the power operation on the coordinate, obtaining that 
\[u'=P^\CP_E(u)=-u\cdot u(P+Q)\]
where the right-hand-side can be expanded in terms of the elliptic curve group law to obtain an expression in $u$ and $d$. Since $\ZZ_2[[a]][d]/(d^3-ad-2)$ is free of rank 3 as an $E_0$ module with basis $\{1,d,d^2\}$, the power operation $P_{E[BU]}$ admits a unique decomposition
\[P_{E[BU]}(x)=Q_0(x)+Q_1(x)d+Q_2(x)d^2,\]
defining additive operations $Q_i$; furthermore, Frobenius congruence at the prime 2 implies that $Q_0(x)=x^2 \mod 2$, and so we define the operation $\theta$ by $Q_0(x)=x^2+2\theta(x).$

\begin{theorem}
	\label{thm:E2bu}
	As above, let $E=E_2$ be Morava $E$-theory of height 2 at $p=2$ in Rezk's model, with $E_0=\ZZ_2[[a]]$. Consider the additive $C_2$ power operation on $E[BU]$ given by
	\[P:E_0[b_1,b_2,\ldots]\to E_0[b_1,b_2,\ldots][[d]]/(d^3-ad-2),\]
	and let 
	\[P(x)=Q_0(x)+Q_1(x)d+Q_2(x)d^2,\] 
	where $Q_0(x)=x^2+2\theta(x)$, as above. Then we have the following mod $(b_1, b_2, \ldots)^2$:
	\begin{align}
		\theta(b_n) &= b_{2n} + \sum_{i\geq 1} b_i\Biggl(\sum_{\substack{j, k, \ell\geq 0\\ 3j+2k + 3\ell + 3=n+i}} \left[\binom{i-j}{j}+\binom{i-j-1}{j-1}\right]\binom{i-2j}{n-j}\binom{k+\ell}{\ell}a^k 2^\ell \Biggr), \tag{1}\\
		Q_1(b_n) &= \sum_{i\geq 1} b_i\Biggl(\sum_{\substack{j, k, \ell\geq 0\\ 3j+2k + 3\ell + 1=n+i}} \left[\binom{i-j}{j}+\binom{i-j-1}{j-1}\right]\binom{i-2j}{n-j}\binom{k+\ell}{\ell}a^k 2^\ell \Biggr), \tag{2}\\
		Q_2(b_n) &= \sum_{i\geq 1} b_i\Biggl(\sum_{\substack{j, k, \ell\geq 0\\ 3j+2k + 3\ell + 2=n+i}} \left[\binom{i-j}{j}+\binom{i-j-1}{j-1}\right]\binom{i-2j}{n-j}\binom{k+\ell}{\ell}a^k 2^\ell \Biggr). \tag{3}
	\end{align}
\end{theorem}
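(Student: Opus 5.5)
The plan is to run the argument from the proof of \autoref{thm:kubu}, using Rezk's coordinate $u$ in place of the multiplicative coordinate. By \autoref{thm:nu}(3), with $h(x)=1+b_1x+b_2x^2+\cdots$ and Rezk's formula $u'=P^\CP_E(u)=-u\cdot u(P+Q)$, we have
\[1+\sum_{n\geq 1}P(b_n)(u')^n=\frac{h(u)\,h(u(P+Q))}{h(u(Q))},\qquad u(Q)=d.\]
Reducing modulo $(b_1,b_2,\ldots)^2$ and comparing $b_j$-coefficients, write $A=u$ and $B=u(P+Q)$. This gives
\[\sum_{n\geq1}[b_j]P(b_n)(u')^n=A^j+B^j-d^j .\]
Since $u'=-AB$, Waring's formula (\autoref{fact:waring}) rewrites $A^j+B^j$ as $\sum_k\bigl[\binom{j-k}{k}+\binom{j-k-1}{k-1}\bigr](u')^k(A+B)^{j-2k}$. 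The sign $(-1)^k$ is absorbed into $u'=-AB$, which is why no signs appear in (1)--(3).

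The key geometric input is to express $A+B$ as a function of $u'$ over $E_0[d]/(d^3-ad-2)$. In the $KU$ case this was the constant $2$. Here I expect an identity of the shape $A+B=d+\lambda\,u'$ with $\lambda$ a power of $d$, which I would guess is $\lambda=d^2$ up to sign. I would derive it from the chord construction on $v^2+auv+v=u^3$. The points $P$, $Q$ and $-(P+Q)$ are collinear, so their $u$-coordinates are the three roots of the cubic obtained by substituting the line through $P$ and $Q$. Reading off the elementary symmetric functions of these roots, and using the formula for $u(-R)$ in terms of $(u(R),v(R))$ together with $v(Q)=-d^3$, should give an exact relation among $u$, $u(P+Q)$, $d$ and the product $u\cdot u(P+Q)$. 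I expect this to be the main obstacle. In particular, I must check whether the relation is genuinely linear in $u'$. If it is not, then $A+B$ is a power series in $u'$, and the binomial $\binom{i-2j}{n-j}$ in the statement indicates that only the linear term can survive. So I would first verify linearity to low order in $u$ by direct expansion before proving it.

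Granting $A+B=d+d^2u'$, expand $(A+B)^{i-2j}$ binomially. The coefficient of $(u')^n$ in the $b_i$-term is
\[\sum_j\Bigl[\tbinom{i-j}{j}+\tbinom{i-j-1}{j-1}\Bigr]\tbinom{i-2j}{n-j}\,d^{\,n+i-3j}.\]
Next, reduce $d^m$ into the basis $\{1,d,d^2\}$ using $d^3=ad+2$. By induction, the coefficient of $d^r$ in $d^m$ is $\sum_{2k+3\ell+r'=m}\binom{k+\ell}{\ell}a^k2^\ell$, with the constant term arising as $2\cdot(\text{coefficient of }d^{0}\text{ in }d^{m-3})$; this is the source of the offsets $+1$, $+2$ and $+3$ in (2), (3) and (1). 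The $d^1$ and $d^2$ coefficients give $Q_1(b_n)$ and $Q_2(b_n)$ directly. The $-d^j$ term only affects $n=0$, so it drops out.

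For $Q_0$, the constant coefficient is $2\times(\cdots)$ plus the contribution of $b_{2n}$ coming from $j=n$, $i=2n$ with $d^0$. Since $b_n^2$ is decomposable, $Q_0(b_n)\equiv 2\theta(b_n)$, and dividing by $2$ (legitimate because $E_0$ is torsion-free) yields (1). Finally, I would sanity-check the resulting formulas against the mod $(2,a)$ table in the introduction.
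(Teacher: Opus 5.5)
Your plan follows the paper's proof step for step:
\begin{itemize}
\item \autoref{thm:nu}(3) with $u'=-u\cdot u(P+Q)$;
\item reduction modulo decomposables;
\item Waring's formula with $A=u$, $B=w=u(P+Q)$, with the signs absorbed by $u'=-AB$;
\item binomial expansion to $\sum_j\bigl[\binom{i-j}{j}+\binom{i-j-1}{j-1}\bigr]\binom{i-2j}{n-j}d^{\,n+i-3j}$;
\item reduction of $d^m$ in the basis $\{1,d,d^2\}$;
\item the boundary term $j=n$, $i=2n$ (coefficient $2$), which produces the $b_{2n}$ in $\theta$.
\end{itemize}
The one step you leave open, $A+B=d+\lambda u'$, is not proved in the paper either. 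The paper imports it from Schumann's thesis in the form $w=(d-u)/(1+d^2u)$, which is equivalent to $A+B=d+d^2u'$. So your guess is right, with sign $+$.

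If you want to prove it rather than cite it, there is a cleaner route than eliminating $v(P)$ from the chord cubic.
\begin{itemize}
\item The function $u=X/Y$ has its only pole, a double one, at $T=[0:0:1]$, and every line $u=c$ (i.e.\ $X=cY$) passes through $T$. Since $O$ is a flex, the fibres of $u$ are $\{P,-T-P\}$.
\item Because $2Q=0$, $w(P)=u(P+Q)$ is invariant under the same involution $P\mapsto -T-P$. Work over the fraction field of the domain $E_0[d]/(d^3-ad-2)$. Then $w=\phi(u)$ for a M\"obius transformation $\phi$, since both functions have degree $2$.
\item The conditions $\phi(0)=w(O)=d$ and $\phi(d)=w(Q)=0$ force $\phi(t)=(d-t)/(1+ct)$. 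Moreover $-1/c=\phi(\infty)=w(T)=u(T+Q)$.
\item The line $u=d$ through $T$ and $Q$ meets $C$ again at $Q'=(d,1)$: using $d^3=ad+2$, the roots of $v^2+(ad+1)v-d^3$ are $-d^3$ and $1$. Hence $T+Q=-Q'$.
\item The negation formula $u(-R)=-v(R)/u(R)^2$ then gives $u(T+Q)=-1/d^2$, i.e.\ $c=d^2$.
\end{itemize}
The resulting identity of rational functions, expanded at $O$, holds in $K[[u]]$ and hence in $(E_0[d]/(d^3-ad-2))[[u]]$.

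One small slip in the reduction of $d^m$. For $m\geq 1$, the constant coefficient of $d^m$ is $2$ times the $d^2$-coefficient of $d^{m-1}$, namely $2\sum_{2k+3\ell+3=m}\binom{k+\ell}{\ell}a^k2^\ell$. It is not $2$ times the constant coefficient of $d^{m-3}$: for example $d^5=2a+a^2d+2d^2$, while $d^2$ has constant term $0$. The offsets $+1,+2,+3$ you state for (2), (3), (1) are nonetheless the correct ones.
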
	
To obtain formulas for $E[BSU]$, we simply use the fact that the generators $d_i$ of $E_0(BSU)\cong E_0[d_2,d_3,\ldots]$ can be chosen such that the map $E_0(BSU)\to E_0(BU)$, which is induced by the canonical inclusion $BSU\to BU$, behaves as follows (see e.g. \cite[Corollary 3.5]{laures2002characteristic}):
\[d_n \mapsto \begin{cases*}
		2\cdot b_n & if $n=2^s$, \\
		b_n & otherwise.
	\end{cases*}\]
Let $B_\theta(i,n)$, $B_{Q_1}(i,n)$, and $B_{Q_2}(i,n)$ denote the large interior sums in the above formulas for $\theta(b_n)$, $Q_1(b_n)$, and $Q_2(b_n)$ respectively, i.e. let
\[\theta(b_n)=b_{2n}+\sum_{i\geq1}B_\theta(i,n)b_i,\qquad
Q_1(b_n)=\sum_{i\geq1}B_{Q_1}(i,n)b_i,\qquad
Q_2(b_n)=\sum_{i\geq1}B_{Q_2}(i,n)b_i.\]
Then we immediately have the following:
\begin{cor}
	\label{cor:E2bsu}
	As above, consider the additive $C_2$ power operation on $E[BSU]$ given by
	\[P:E_0[d_2,d_3,\ldots]\to E_0[d_2,d_3,\ldots][[d]]/(d^3-ad-2),\]
	and let $P(x)=Q_0(x)+Q_1(x)d+Q_2(x)d^2$. Then we have the following mod $(d_2, d_3, \ldots)^2$:
	\begin{align}
		\theta(d_n)&=
		\begin{cases}
			d_{2n}+\sum\limits_{t\geq 1}B_\theta(2^t,n)d_{2^t}+\sum\limits_{\substack{i\geq 2\\ i\neq 2^t}}2B_\theta(i,n)d_i\hspace{15.13pt}
			& \hspace{15.13pt} \text{if } n=2^s\\[2ex]
			d_{2n}+\sum\limits_{\substack{i\geq 2\\ i\neq 2^t}}B_\theta(i,n)d_i+\sum\limits_{2^t>n}\frac{B_\theta(2^t,n)}{2}d_{2^t}
			& \hspace{15.13pt} \text{if } n\neq 2^s
		\end{cases}\\
		Q_1(d_n)&=
		\begin{cases}
			\hphantom{d_{2n}+{}}\sum\limits_{t\geq 1}B_{Q_1}(2^t,n)d_{2^t}+\sum\limits_{\substack{i\geq 2\\ i\neq 2^t}}2B_{Q_1}(i,n)d_i
			& \text{if } n=2^s\\[2ex]
			\hphantom{d_{2n}+{}}\sum\limits_{\substack{i\geq 2\\ i\neq 2^t}}B_{Q_1}(i,n)d_i+\sum\limits_{2^t>n}\frac{B_{Q_1}(2^t,n)}{2}d_{2^t}
			& \text{if } n\neq 2^s
		\end{cases}\\
		Q_2(d_n)&=
		\begin{cases}
			\hphantom{d_{2n}+{}}\sum\limits_{t\geq 1}B_{Q_2}(2^t,n)d_{2^t}+\sum\limits_{\substack{i\geq 2\\ i\neq 2^t}}2B_{Q_2}(i,n)d_i
			& \text{if } n=2^s\\[2ex]
			\hphantom{d_{2n}+{}}\sum\limits_{\substack{i\geq 2\\ i\neq 2^t}}B_{Q_2}(i,n)d_i+\sum\limits_{2^t>n}\frac{B_{Q_2}(2^t,n)}{2}d_{2^t}
			& \text{if } n\neq 2^s
		\end{cases}
	\end{align}
\end{cor}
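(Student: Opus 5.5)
The plan is to deduce \autoref{cor:E2bsu} from \autoref{thm:E2bu} using the ring map $\iota:E_0(BSU)\to E_0(BU)$ induced by $BSU\to BU$. This map is induced by an $\EE_\infty$ map of spaces, so $E[BSU]\to E[BU]$ is an $\EE_\infty$ $E$-algebra map. It therefore commutes with the additive power operation $P$. Because $\{1,d,d^2\}$ is a basis of $E_0[d]/(d^3-ad-2)$ on both sides, $\iota$ also commutes with each of $Q_0,Q_1,Q_2$. Since $E_0$ is torsion free and $\iota(x^2)=\iota(x)^2$, it commutes with $\theta$ as well.

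First I would check that $\iota$ is injective and preserves the decomposable ideals. On generators it sends $d_n\mapsto c_nb_n$, with $c_n\in\{1,2\}$ and $E_0$ torsion free, so $\iota$ is injective. It also carries $(d_2,d_3,\ldots)$ into $(b_1,b_2,\ldots)$, hence squares into squares. Moreover, the induced map on indecomposables $(d)/(d)^2\to (b)/(b)^2$ is injective: it sends the basis $\{d_n\}_{n\geq2}$ to $\{c_nb_n\}$, and $b_1$ is missing from the image.

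Next I would compute $\iota(\theta(d_n))=\theta(c_nb_n)$ mod decomposables, which is just linear algebra on indecomposables. Additivity of $\theta$ modulo decomposables comes from $\theta(x+y)=\theta(x)+\theta(y)-xy$ (for $p=2$), and $\theta(2b_n)=2\theta(b_n)-b_n^2$ is linear mod squares. So $\iota\theta(d_n)\equiv c_n\big(b_{2n}+\sum_i B_\theta(i,n)b_i\big)$. To rewrite this in the $d$-basis, substitute $b_i=\iota(d_i)$ for $i$ not a power of $2$ and $b_{2^t}=\iota(d_{2^t})/2$. The term $b_{2n}$ becomes $d_{2n}/c_{2n}$, and $c_{2n}=c_n$ since $2n$ is a power of $2$ exactly when $n$ is. Distributing the factor $c_n$ then yields the two displayed cases. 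The $i=1$ term $b_1$ must have zero coefficient, i.e. $B_\theta(1,n)=0$, or rather it must vanish in the total. This is forced because $\iota\theta(d_n)$ lies in the image of $\iota$. The same bookkeeping, without the $b_{2n}$ term, gives the formulas for $Q_1$ and $Q_2$.

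The main obstacle is integrality in the case $n\neq 2^s$. There the coefficient $B(2^t,n)/2$ appears, and one must know it lies in $E_0$; one also has to account for the restriction $2^t>n$ in the sum. I would argue abstractly rather than from the binomial sums. The element $\theta(d_n)$ exists in $E_0(BSU)$, and $\iota$ is injective on indecomposables with image spanned by $\{b_i: i\neq 2^t\}\cup\{2b_{2^t}\}$. Hence the $b_{2^t}$-coefficient of $\iota\theta(d_n)$ is even and $b_1$ does not appear, which forces the corresponding $B(1,n)$ and small-index contributions to vanish. The restriction $2^t>n$ should then come from degree and filtration vanishing of $B(2^t,n)$ for $2^t\le n$; I would try to confirm this from the constraint $3j+\dots=n+i$ together with $\binom{i-2j}{n-j}$, which forces $i\geq n$. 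If it fails in the edge case $2^t=n$, I would instead absorb those terms using $b_{2^t}=d_{2^t}/2$ and the evenness just established.
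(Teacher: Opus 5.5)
Your proposal is correct and follows the paper's route: push forward along the $\EE_\infty$ map $E[BSU]\to E[BU]$, which commutes with $P$ and hence with $Q_0,Q_1,Q_2,\theta$, then rewrite $\theta(c_nb_n)$ and $Q_k(c_nb_n)$ in the $d$-basis via $d_n\mapsto c_nb_n$. The paper calls this immediate, and your extra checks are exactly the details it leaves implicit: injectivity on indecomposables gives $B(2^t,n)/2\in E_0$, and $\binom{i-2j}{n-j}\neq 0\Rightarrow i\geq n$ gives both $B(1,n)=0$ and the range $2^t>n$ (the edge case $2^t=n$ cannot occur when $n\neq 2^s$).
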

\begin{rmk}
	\label{rmk:admissiblebasis}
	\cite{rezk2008poweroperationsmoravaetheory} computes the commutation and Adem relations for the operations $\theta$, $Q_1$, and $Q_2$ on $E_0$, and finds that the ring $\Delta$ (consisting of the natural operations acting on indecomposables) has an admissible basis; i.e. it is a free module over $E_0$ on generators of the form
	\[\theta^jQ_{k_1}\cdots Q_{k_r}, \quad j\geq0, r\geq 0, k_i\in\{1,2\}.\]
	In the introduction, we noted that $M=\widehat{Q}((E_2)_0(BSU))$ appears to have a nice structure: if $\varepsilon:\Delta\to \FF_2$ is the augmentation which kills $\theta$, $Q_1$, $Q_2$, and $(2,a)$, then we find that $\FF_2 \otimes_\Delta M\cong \FF_2\{d_2\}\oplus \FF_2\{d_3\}$, suggesting that $M$ may be finitely generated as a $\Delta$-module. Explicitly, from the formulas of \autoref{cor:E2bsu}, it is not too hard to see that the following identities hold mod $(2,a)$:
	\begin{alignat*}{2}
		\theta(d_n)&=d_{2n},\quad Q_1(d_n)=Q_2(d_n)=0 &&\qquad \text{if $n=2^s$,}\\
		\theta(d_n)&=d_{2n}+d_{2n+3} &&\qquad \text{if $n\neq2^s$, $n\equiv0,1 \bmod 4$,}\\
		\theta(d_n)&=d_{2n} &&\qquad \text{if $n\neq2^s$, $n\equiv2,3 \bmod 4$,}\\
		Q_1(d_n)&=d_{2n+1} &&\qquad \text{if $n\neq2^s$,}\\
		Q_2(d_n)&=d_{2n-1} &&\qquad \text{if $n\neq2^s$, $n$ odd,}\\
		Q_2(d_n)&=d_{2n-1}+d_{2n+2} &&\qquad \text{if $n\neq2^s$, $n$ even.}
	\end{alignat*}
	Furthermore, $d_2$ and $d_3$ do not vanish in $\FF_2 \otimes_\Delta M$, because mod $(2,a)$, operations on $d_n$ do not contain $d_i$ terms for $i<n$, and the $a^0$ coefficients of operations on $d_2$ and $d_3$ do not contain $d_2$ and $d_3$ terms. From these observations and the commutation relations of \cite{rezk2008poweroperationsmoravaetheory}, it follows that $\FF_2 \otimes_\Delta M\cong \FF_2\{d_2\}\oplus \FF_2\{d_3\}$. Interestingly, the fact that $Q_1(d_n)=Q_2(d_n)=0$ mod $(2,a)$ when $n=2^s$ suggests that $M$ may split as a module mod 2. 
\end{rmk}
\begin{rmk}
	\label{rmk:topologicallift}
	After finding a finite-celled resolution of $M$, one might hope to lift this algebraic resolution to the level of $K(2)$-local $\EE_\infty$ $E_2$-algebras. Specifically, the above evidence would suggest that $E_2[BSU]$ is $K(2)$-locally finite-celled as an $\EE_\infty$ $E_2$-algebra. This appears analogous to the case of $E_n[\ZZ]$: $E_n[\ZZ]$ is finite-celled for all $n$, which can be seen via the symmetric power of spheres filtration on $\ZZ$ (e.g. see \cite[Section 2.5]{mathew2014thursday}). Note that it is essential that $E_2$ has height at least $2$, because $KU[BSU]$ is not finite-celled; its homology is not finitely generated as a $\TT$-algebra, which can be seen from \autoref{thm:kubu} (the sense is that there are ``not enough operations'' at height 1 to cover all the generators in a finite-length procedure).
\end{rmk}

\begin{proof}[Proof of \autoref{thm:E2bu}]
	In analogy to the case of height 1, we can use Rezk's model and \autoref{thm:nu} to compute $P(b_i)$ for $b_i\in E_0(BU) \cong \ZZ_2[[a]][b_1,\ldots]$. In particular, if we let $h(x)=1+b_1x+b_2x^2+\ldots$, then we obtain the equation
	\[1+P(b_1)u'+P(b_2)(u')^2+\ldots=\frac{h(u)h(u(P+Q))}{h(d)}\]
	where $u'=P^\CP_E(u)=-u\cdot u(P+Q)$, and $d$ denotes the coordinate of the universal 2-torsion point $Q$. We can solve this equation to obtain $P(b_i)$. In particular, from now on, we will work mod decomposables. 
	
	Letting $w=u(P+Q)$, we obtain that 
	\[\sum_{n\geq 1}P(b_n)(u')^n=\sum_{i\geq 1}b_i(u^i+w^i-d^i)\]
	mod decomposables. By matching $b_i$ coefficients, we find that 
	\[\sum_{n\geq 1}[b_i]P(b_n)(u')^n=u^i+w^i-d^i.\]
	
	The bulk of the work of the proof rests on a remarkable fact proven in the thesis of Schumann \cite{schumann2014k}. Schumann shows computationally (via manipulations of the elliptic curve group law) that the power operation on the coordinate is a rational expression:
	\begin{fact}[Remark 3.3.3 of \cite{schumann2014k}] 
		For $w=u(P+Q)$, we have that
		\[w=\frac{d-u}{1+d^2u}.\]
	\end{fact}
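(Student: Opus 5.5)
We check the formula by a direct chord-and-tangent computation on Rezk's curve $C$, working in the affine chart $(u,v)=(X/Y,Z/Y)$ where the identity $O=[0:1:0]$ sits at the origin. Lines in $\PP^2$ are still lines in this chart, so collinearity can be tested directly in $(u,v)$. Throughout we work over $\ZZ_2[[a]][d]/(d^3-ad-2)$. A point $P$ of the formal neighbourhood is $(u,v(u))$, where $v(u)\in E_0[[u]]$ is the unique power series solution of $v^2+auv+v=u^3$; it has the form $v=u^3+\cdots$.

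First we note that $Q=(d,-d^3)$ lies on $C$. Indeed $d^6-ad^4-d^3=d^3(d^3-ad-1)=d^3$, which is exactly the required relation. Next comes the chord step. The line through $P$ and $Q$ is $v=\lambda u+\nu$ with
\[\lambda=\frac{v+d^3}{u-d},\qquad \nu=v-\lambda u.\]
Substituting into $v^2+auv+v=u^3$ gives a monic cubic in $u$ whose $u^2$-coefficient is $-(\lambda^2+a\lambda)$. The third intersection point $R$ therefore has
\[u(R)=\lambda^2+a\lambda-u-d,\qquad v(R)=\lambda u(R)+\nu,\]
and $P+Q=-R$.

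To compute the negation we use a second chord through $O=(0,0)$. The line through $O$ and $R$ is $v=mu$ with $m=v(R)/u(R)$. Substituting and dividing out the root $u=0$ leaves $u^2-(m^2+am)u-m=0$. The product of its roots is $-m$, so
\[w=u(-R)=-\frac{m}{u(R)}=-\frac{v(R)}{u(R)^2}.\]
So $w$ is an explicit rational expression in $u$, $v$ and $d$.

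The remaining task is to show this expression equals $(d-u)/(1+d^2u)$. Clearing denominators turns this into a polynomial identity in $u,v,d,a$, to be verified modulo the ideal $(v^2+auv+v-u^3,\ d^3-ad-2)$. We reduce powers of $v$ using the curve equation and powers of $d$ above $d^2$ using $d^3=ad+2$. An equivalent and probably cleaner route avoids $R$ altogether. Set $w=(d-u)/(1+d^2u)$ and determine the corresponding $v_W$ from the curve. Then verify that $P$, $Q$ and $-W$ are collinear, using the second-chord description of negation; equivalently, verify that $W-Q=P$.

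We expect this algebraic simplification to be the main obstacle. The expressions involve $\lambda^2$, with denominators $(u-d)^2$, so factoring out $(u-d)$ and $(1+d^2u)$ requires care in using the two relations. We would first do the reduction by computer algebra (a Gröbner basis for the ideal above), and then extract a human-readable chain of factorizations.

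As sanity checks, which also pin down sign conventions, $u=0$ (so $P=O$) should give $w=d$, and $u=d$ (so $P=Q$, with $2Q=O$) should give $w=0$. The formula satisfies both. Finally, $w$ is a power series in $u$ because $1+d^2u$ is invertible in $E_0[d][[u]]$.
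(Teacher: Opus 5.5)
Your chord-and-tangent setup is correct: the formula for $u(R)$, negation via the line through $O$, and $w=-v(R)/u(R)^2$ all check out. It is also the same route the paper relies on, since the paper gives no argument of its own and cites Schumann's direct manipulation of the elliptic-curve group law, so the real content is the final simplification you defer to computer algebra. That simplification does go through, and it can even be bypassed by comparing divisors: with $T=[0:0:1]$, one has $\mathrm{div}(u)=(O)+(-T)-2(T)$, the line $u=d$ meets $C$ in $Q$, $-(Q+T)$ and $T$, and the line $u=-1/d^2$ passes through $T$ and is tangent to $C$ at $T+Q$, so both $w$ and $(d-u)/(1+d^2u)$ have divisor $(Q)+(-(Q+T))-2(Q+T)$ and both equal $d$ at $O$.
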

	We are not currently aware of a more theoretical way to prove this surprising identity (though we would be very interested in finding one). Using this fact, we find that $u+w=d+d^2u'$, as follows:
	\[u+w=u+\frac{d-u}{1+d^2u}=\frac{d+d^2u^2}{1+d^2u}=\frac{d+(d^3u-d^3u)+d^2u^2}{1+d^2u} =d+d^2\frac{u^2-du}{1+d^2u}=d+d^2u'.\]
	Then we can yet again apply Waring's formula (\autoref{fact:waring}), with $A=u$ and $B=w$:
	\begin{align*}
		d^i+\sum\nolimits_{n\geq1} [b_i]P(b_n)(u')^n &=u^i+w^i\\
		&=\sum_{j\geq 0} \left[\binom{i-j}{j}+\binom{i-j-1}{j-1}\right] (u')^j(d+d^2u')^{i-2j}\\
		&=\sum_{j,m\geq 0} \left[\binom{i-j}{j}+\binom{i-j-1}{j-1}\right] \binom{i-2j}{m}(u')^{j+m}d^{i-2j+m}
	\end{align*}
	where we expanded $(d+d^2u')^{i-2j}$ to obtain the last equality. Thus, after matching coefficients of $(u')^n$, we have that
	\[[b_i]P(b_n)=\sum_{j\geq 0} \left[\binom{i-j}{j}+\binom{i-j-1}{j-1}\right] \binom{i-2j}{n-j}d^{n+i-3j}.\]
	To isolate the operations $Q_0$, $Q_1$, and $Q_2$, we must examine the recursion that occurs in powers of $d$. Since $\{1,d,d^2\}$ is a basis for $E_0[d]/(d^3-ad-2)$ over $E_0$, we write
	\[d^m=f_m+g_md+h_md^2\]
	defining sequences $f_m,g_m,h_m\in E_0$. By repeatedly multiplying by $d$, one sees that $f_m,g_m,h_m$ each satisfy shifts of the same recurrence relation, which we use to define the sequence $D_r$: 
	\[D_r =\begin{cases}
		aD_{r-2}+2D_{r-3}, & \text{if } r\geq 3,\\
		0, & \text{if } r=2 \text{ or } r\leq 0,\\
		1, & \text{if } r=1.
	\end{cases}\]
	Specifically, for $m\geq 1$, we find that $f_m=2D_{m-2}$, $g_m=D_m$, and $h_m=D_{m-1}$, and
	\[d^m= 2D_{m-2}+D_md+D_{m-1}d^2.\]
	Now let us consider the generating function $G(x):=\sum_{m\geq 0}D_m x^m$. We claim that
	\[G(x)=\frac{x}{1-ax^2-2x^3},\]
	which can be seen as follows:
	\begin{align*}
		(1-ax^2-2x^3)G(x) &= \sum_{m\geq 0}D_m x^m-a\sum_{m\geq 2}D_{m-2} x^m-2\sum_{m\geq 3}D_{m-3} x^m\\
		&=D_0+D_1x+D_2x^2-aD_0x^2+\sum_{m\geq 3}(D_m-aD_{m-2}-2D_{m-3})x^m\\
		&= x. 
	\end{align*}
	Thus, we can obtain a closed-form expression for $D_m$:
	\begin{align*}
		D_m &= [x^m]\frac{x}{1-ax^2-2x^3}\\
		&=[x^m]\left(x\sum_{k\geq 0}(ax^2+2x^3)^k\right)\\
		&=[x^m]\left(\sum_{k,\ell\geq 0}\binom{k+\ell}{\ell}a^k 2^\ell x^{2k+3\ell+1}\right)\\
		&=\sum_{\substack{k, \ell\geq 0\\ 2k + 3\ell + 1=m}}\binom{k+\ell}{\ell}a^k 2^\ell.
	\end{align*}
	Now we can apply this to our expression for $P(b_n)$. We have from above that
	\[[b_i]P(b_n)=\sum_{j\geq 0}C_j(i,n)d^{n+i-3j}\]
	where $C_j(i,n)=\left[\binom{i-j}{j}+\binom{i-j-1}{j-1}\right]\binom{i-2j}{n-j}$. In order to expand the above expression using the identity
	\[d^{n+i-3j}=2D_{n+i-3j-2}+D_{n+i-3j}d+D_{n+i-3j-1}d^2,\]
	which is only true for $n+i-3j\geq 1$, we must look at the values of $i, n,j$ for which $C_j(i,n)\neq 0$ and $n+i-3j\leq 0.$
	
	In order for $C_j(i,n)$ to be nonzero, it must be that $i\geq 2j$, $n\geq j$, since we use the convention that $\binom{a}{b}=0$ if at least one of $a,b$ is negative, or if $a<b$. Therefore, it must be that $n+i-3j=0$, and the only term affected by this condition is the $j=n$ term of $[b_{2n}]P(b_n)$. In this case, $C_{n}(2n,n)=2$, and since 
	\[2D_{-2}+D_{0}d+D_{-1}d^2=0,\]
	this boundary case simply contributes one extra $2\cdot b_{2n}$ term to $P(b_n)$. In other words, we obtain that 
	\[P(b_n)= 2\cdot b_{2n} + \sum_{i\geq 1} b_i\Biggl(\sum_{j\geq 0}C_j(i,n)\left(2D_{n+i-3j-2}+D_{n+i-3j}d+D_{n+i-3j-1}d^2\right)\Biggr).\]
	Then we simply substitute the formula 
	\[D_m=\sum_{\substack{k, \ell\geq 0\\ 2k + 3\ell + 1=m}}\binom{k+\ell}{\ell}a^k 2^\ell\]
	from above. We obtain $Q_0$, $Q_1$, and $Q_2$ by isolating the coefficients of $1$, $d$, and $d^2$, respectively, and we obtain $\theta$ by dividing $Q_0$ by $2$, yielding the desired identities:
	\begin{align}
		\theta(b_n) &= b_{2n} + \sum_{i\geq 1} b_i\Biggl(\sum_{\substack{j, k, \ell\geq 0\\ 3j+2k + 3\ell + 3=n+i}} \left[\binom{i-j}{j}+\binom{i-j-1}{j-1}\right]\binom{i-2j}{n-j}\binom{k+\ell}{\ell}a^k 2^\ell \Biggr), \tag{1}\\
		Q_1(b_n) &= \sum_{i\geq 1} b_i\Biggl(\sum_{\substack{j, k, \ell\geq 0\\ 3j+2k + 3\ell + 1=n+i}} \left[\binom{i-j}{j}+\binom{i-j-1}{j-1}\right]\binom{i-2j}{n-j}\binom{k+\ell}{\ell}a^k 2^\ell \Biggr), \tag{2}\\
		Q_2(b_n) &= \sum_{i\geq 1} b_i\Biggl(\sum_{\substack{j, k, \ell\geq 0\\ 3j+2k + 3\ell + 2=n+i}} \left[\binom{i-j}{j}+\binom{i-j-1}{j-1}\right]\binom{i-2j}{n-j}\binom{k+\ell}{\ell}a^k 2^\ell \Biggr). \tag{3}
	\end{align}
\end{proof}

\printbibliography
\end{document}